\documentclass{article}
\usepackage{graphicx} 

\usepackage[backend=biber,
            sorting=nyt,
            giveninits=true,
            maxnames=99]{biblatex}

\DeclareFieldFormat[article]{title}{#1}
\DeclareFieldFormat[article]{journaltitle}{\mkbibemph{#1}}
\DeclareFieldFormat{volume}{#1}
\DeclareFieldFormat{number}{(#1)}
\DeclareFieldFormat{pages}{pp.~#1}
\DeclareFieldFormat{year}{#1}

\renewbibmacro*{journal+issuetitle}{%
  \usebibmacro{journal}%
  \setunit{\space}%
  \printfield{volume}%
  \iffieldundef{number}
    {}
    {\printfield{number}}%
  \setunit{\addcomma\space}%
  \printfield{year}%
  \newunit}

\renewbibmacro*{publisher+location+date}{%
  \printlist{location}%
  \iflistundef{publisher}
    {}
    {\setunit*{\addcolon\space}%
     \printlist{publisher}}%
  \setunit*{\addcomma\space}%
  \printfield{year}%
  \newunit}

\usepackage{tikz-cd}

\usepackage{amsmath,amsfonts,amssymb,mathtools}

\usepackage{graphicx,float}
\usepackage{amsthm}

\newtheorem{remark}{Remark}

\theoremstyle{definition}
\newtheorem{Rem}{Remark}[section]

\theoremstyle{definition}

\theoremstyle{definition}
\newtheorem{cor}{Corollary}[section]

\theoremstyle{definition}

\theoremstyle{definition}
\newtheorem{definition}{Definition}[section]

\theoremstyle{definition}
\newtheorem{thm}{Theorem}[section]

\theoremstyle{definition}
\newtheorem{lemma}{Lemma}[section]

\theoremstyle{definition}
\newtheorem{prop}{Proposition}[section]

\theoremstyle{definition}

\theoremstyle{definition}

\theoremstyle{definition}

\newcommand{\epn}{\operatorname{epn}}

\usepackage[ruled,vlined]{algorithm2e}
\usepackage{algorithmic}

\title{On the Secure Domination of Mycielskian Graphs}

\author{Kiran Bhutani, Anthony Christiana, and Peter Ulrickson}

\date{March 2026}

\begin{document}

\maketitle

\begin{abstract}

  \noindent 

  We study the secure domination number of the Mycielskian graph of a simple, connected, undirected graph. We give generally applicable bounds, compute secure domination numbers for Mycielskians of important families of graphs, and construct families of graphs realizing particular values of (secure) domination parameters.
\end{abstract}

\begin{center}
\textbf{Keywords:} Secure domination, Mycielskian graphs, graph domination.
\end{center}

\section{Introduction}

This article investigates two graph-theoretic notions jointly: the secure domination number and the Mycielskian construction. We begin by briefly recalling each notion in turn.

\subsection{Secure Domination}
The domination number of a graph $G$, denoted $\gamma(G)$, is the minimum cardinality of a dominating set of $G$. Thus, $\gamma(G) =1$ if and only if $G$ has a vertex of degree $n-1$ where $n$ is the order of $G$. We call a vertex of degree $n-1$ a \textit{dominating vertex} of $G$. For more on graph domination see, for instance \cite{h1}.

\

A dominating set $S$ of a graph $G$ is called a secure dominating set of $G$ \cite{1} if for any $u \in V(G)-S, \exists  v\in S$ such that $S-\{v\} \cup \{u\}$ is a dominating set of $G$. The cardinality of a minimum secure dominating set of G is called the secure domination number of G and is denoted by $\gamma_s(G)$. 

\

If two vertices in a graph are not connected by an edge, a guard located at one vertex does not defend the other vertex. This observation yields the following result about complete graphs.

\begin{thm}\cite{ck}
  $\gamma_s(G) =1$ if and only if $G$ is a complete graph.
\end{thm}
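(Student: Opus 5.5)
We prove both directions directly from the definition of a secure dominating set. We work with a simple graph $G$ on $n \ge 1$ vertices.

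\emph{If $G$ is complete, then $\gamma_s(G)=1$.} Take $S=\{v\}$ for any vertex $v$. Since $v$ is adjacent to every other vertex, $S$ is dominating. Now let $u \in V(G)-S$. The set $(S-\{v\})\cup\{u\}=\{u\}$ is again dominating, because $u$ is also adjacent to every other vertex. Hence $S$ is secure, so $\gamma_s(G)\le 1$. Since any secure dominating set is dominating and hence nonempty, $\gamma_s(G)=1$.

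\emph{If $\gamma_s(G)=1$, then $G$ is complete.} Let $S=\{v\}$ be a secure dominating set. Because $S$ is dominating, $v$ is adjacent to every other vertex, so $v$ is a dominating vertex. Now fix any $u\neq v$. The only element of $S$ available to swap out is $v$, so security gives that $\{u\}$ is a dominating set. Thus $u$ is adjacent to every vertex other than itself. As $u$ was arbitrary, every vertex has degree $n-1$, and $G=K_n$.

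This matches the paper's remark that a guard cannot defend a non-adjacent vertex. There is no real obstacle beyond one point: with $|S|=1$, the swap is forced to be $v$ for $u$. That forcing is what turns each vertex into a dominating vertex. The only edge case to check is $n=1$ (the graph $K_1$), where both sides of the equivalence hold trivially.
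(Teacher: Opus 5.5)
Your proof is correct and matches what the paper has in mind. The paper only cites this result and offers the observation that a guard cannot defend a non-adjacent vertex; your forced swap ($\{u\}$ must itself dominate) makes that observation precise, and the converse and the $K_1$ case are the routine details the paper leaves out.
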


\

\begin{remark}
    Let $P_n$ denote the path on $n$ vertices. A secure dominating set is obtained by choosing every other vertex (the second, fourth, and sixth) in each block of seven adjacent vertices. More formally, we have this theorem, which also involves cycles $C_n$.
\end{remark}

\begin{thm}\label{secDomPath}\cite{ck}
  $\gamma_s(P_n)=\gamma_s(C_n) =\lceil \frac{3n}{7}\rceil $
\end{thm}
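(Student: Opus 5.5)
The proof has two halves: an explicit construction giving the upper bound, and a counting argument giving the matching lower bound. Both are carried out for paths and adapted to cycles. Label the path vertices $v_1,\dots,v_n$ in order.

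\textbf{Upper bound.} I follow the Remark and partition the vertices into consecutive blocks of seven. In each block $v_{7k+1},\dots,v_{7k+7}$ I take $v_{7k+2},v_{7k+4},v_{7k+6}$ into $S$.

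Within a block, each undefended vertex $u$ has a neighbour $v\in S$ with $S-\{v\}\cup\{u\}$ still dominating:
\begin{itemize}
\item $v_1$ swaps with $v_2$;
\item $v_3$ swaps with $v_4$, since $v_4$ is then dominated by $v_3$ and $v_5$ by $v_6$;
\item $v_5$ swaps with $v_4$ (symmetric to the previous case);
\item $v_7$ swaps with $v_6$.
\end{itemize}
A final partial block of $r$ vertices ($1\le r\le 6$) is handled by choosing $\lceil 3r/7\rceil$ vertices by inspection, giving respectively $1,1,2,2,3,3$ vertices. Adjacent blocks interact only at the boundary pair $v_{7k+7},v_{7k+8}$, and neither of these is in $S$. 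Each of them is already defended inside its own block, so the swaps remain valid.

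For $C_n$ the same pattern works when $7\mid n$. Otherwise I adjust the last block so that the wrap-around edge $v_nv_1$ causes no conflict; one more check by hand suffices.

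\textbf{Lower bound.} This is the main obstacle. I would use a discharging or counting argument on the gaps between consecutive vertices of $S$ along the path or cycle. Since $S$ dominates, every gap of non-$S$ vertices has length at most $2$.

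If a gap has length $2$, say $x,y$ lying between $a,b\in S$, then defending $x$ requires swapping $x$ with $a$. After the swap, the vertex on the other side of $a$ must still be dominated, so it lies in $S$. Hence a length-$2$ gap forces the adjacent gap to have length $0$, or, at an endpoint of the path, forces an endpoint configuration.

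I would then assign each vertex of $S$ a charge equal to the number of vertices it accounts for: itself plus half of each adjacent gap. Next I show that any window of three consecutive $S$-vertices accounts for at most $7$ vertices. This gives $n\le \tfrac{7}{3}|S|$ plus endpoint corrections, and the endpoint corrections have to be tracked carefully for paths. The case analysis is finite: gaps are $0,1,2$, with two $2$-gaps never flanking the same $S$-vertex.

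For cycles there are no endpoints, so the bound $|S|\ge 3n/7$ follows directly. For paths, the ends of the path must be shown not to help, since $v_1\notin S$ forces $v_2\in S$; this gives the ceiling. The delicate part is the exact averaging over the pattern $2,1,2,\dots$ of gaps together with the boundary terms. If that fails, I would fall back on induction on $n$: remove a block of seven vertices whose intersection with $S$ provably has at least $3$ vertices, and check the small cases $n\le 7$ by hand.
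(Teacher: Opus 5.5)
Your upper bound is correct. It is the block-of-seven construction from the Remark preceding the statement; the paper itself gives no proof and only cites the result. The lower bound, however, has a genuine gap: it rests on a false local lemma, and the correct constraint you also state is too weak.

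\textbf{The false lemma.} You claim that a $2$-gap $x,y$ between $a,b\in S$ forces the gap on the far side of $a$ to have length $0$. After the swap, the far neighbour $a'$ of $a$ only has to be dominated by $S-\{a\}\cup\{x\}$. That also happens when $a'\notin S$ and its other neighbour lies in $S$, i.e.\ when the far gap has length $1$.

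Your own extremal sets refute the lemma. In $P_{14}$ with $S=\{v_2,v_4,v_6,v_9,v_{11},v_{13}\}$, the $2$-gap $v_7v_8$ sits next to the $1$-gap $\{v_5\}$. Yet $v_7$ is defended by $v_6$, because $v_4$ still dominates $v_5$. Had the lemma held, every window of three $S$-vertices would account for at most $6$ vertices. That gives $|S|\ge n/2$, contradicting $\gamma_s(C_7)\le 3$.

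\textbf{The weaker constraint.} You also state that no $S$-vertex is flanked by two $2$-gaps. This is true but insufficient. The window with gaps $1,2,1,2$ has weight $3+\tfrac12+2+1+1=7.5$. Moreover, the periodic gap sequence $2,1,2,1,\dots$ on $C_{5k}$ satisfies that constraint with only $2n/5<3n/7$ vertices.

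\textbf{The missing idea.} You need the constraint coming from vertices in $1$-gaps. If $z$ is the only vertex between $a,b\in S$, it must be swapped with $a$ or with $b$. Swapping with $a$ fails exactly when the gap on the far side of $a$ has length $2$. Hence the pattern $2,1,2$ is forbidden, as well as $2,2$. These hold for $n\ge 4$, where $a'$ is not adjacent to $x$.

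With both constraints, your window weight $3+g_0/2+g_1+g_2+g_3/2$ really is at most $7$. If $\{g_1,g_2\}=\{1,2\}$, both outer gaps are at most $1$; otherwise $g_1+g_2\le 2$. Summing over the $|S|$ cyclic windows counts every vertex exactly three times, so $7|S|\ge 3n$ on $C_n$. Equivalently, the two constraints force every $7$ consecutive vertices of $C_n$ ($n\ge 7$) to contain at least $3$ vertices of $S$, and double counting finishes. This avoids your fallback induction, which would also have to cope with $S$ minus the removed block not being secure in the shorter path.

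\textbf{Simplifications and the case $n=3$.} The wrap-around adjustment for $C_n$ and the endpoint bookkeeping for $P_n$ are unnecessary. By the argument in the proof of Lemma~\ref{spanningBound}, adding edges never destroys a secure dominating set, so $\gamma_s(C_n)\le\gamma_s(P_n)$. As in the paper's proof of Theorem~\ref{secDomMuPath}, it therefore suffices to prove the upper bound for paths and the lower bound for cycles, checking $n\le 3$ by hand. That check matters: the cycle formula needs $n\ge 4$, since $C_3=K_3$ has $\gamma_s=1<\lceil 9/7\rceil$. Your assertion that the cycle bound ``follows directly'' fails there.
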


\subsection{The Mycielskian of a Graph}

The Mycielskian of a graph $G$, denoted $\mu(G)$, is a supergraph of $G$ which was first introduced by Jan Mycielski in 1955 \cite{Myc}. Mycielski noted that the graph $\mu^i(K_2)$ is triangle-free, $(i-1)$-vertex-connected, and has chromatic number $(i+1)$. The Mycielskian is sometimes called the \textit{cone over} $G$.
\

The Mycielski construction increases the domination number by 1, which is to say ${\gamma(\mu(G)) = \gamma(G) + 1}$ \cite{FM}. However, outside of some work on ``generalized Mycielskians,'' little is known about the secure domination number of Mycielskian graphs.  \cite{NaRa}

This work is one part of a larger program: to relate the secure domination of a given graph to the secure domination number of a graph resulting from some $n$-ary operation. Here we consider the Mycielski construction as a unary operation on graphs.

\

\begin{definition}
  The Mycielskian graph (see \cite{Myc}) of a graph $G$ of order $n$ and size $m$ is a supergraph of $G$, denoted $\mu(G)$, with order $2n + 1$ and size $3m+n$. The Mycielski of a graph $G$ is constructed as follows:
\begin{enumerate}
    \item Let the vertices of $G$ be $\{v_1, v_2, \dots , v_{n}\}$.
    \item Take a graph $K_{1,n}$ whose set of vertices is disjoint from those of $G$. Denote by $w$ the central vertex of $K_{1,n}$ and label the $n$ other vertices of $K_{1,n}$ as $U = \{u_1, u_2, \dots, u_{n}\}$. The set $\{v_1, v_2, \dots , v_{n},u_1, u_2, \dots, u_{n},w\}$ is the set of vertices of $\mu(G)$.
    \item The edges of $\mu(G)$ are the edges of $G$ and $K_{1,n}$ together with the following:
 for each edge $v_i v_j \in E(G)$, the edges $u_i v_j$ and $v_i u_j$ are in $E(\mu(G))$.

\end{enumerate}
\end{definition}

\

We will consistently use the symbols $w$ and $u_i$ in the manner of the preceding definition. In other words, the vertex of degree $n$ (in the subgraph $K_{1,n}$)  will be referred to as $w$, and the leaf of $K_{1,n}$ associated to the vertex $v_i \in G$ will be referred to as $u_i$. We will call $u_i$ the \textit{image} of $v_i$. See Figure~\ref{fig:muP4}.

\begin{figure}
    \centering

    \begin{center}
\begin{tikzpicture}[scale=1.2, colorstyle/.style={circle,draw=black!100,thick,inner sep=2pt, minimum size=3mm}]
 \node (a1) at (0,0) [colorstyle,label=below:$v_1$]{};
 \node (a2) at (1,0) [colorstyle,label=below:$v_2$]{};
 \node (a3) at (2,0) [colorstyle,label=below:$v_3$]{};
 \node (a4) at (3,0) [colorstyle,label=below:$v_4$]{};
 \node (b1) at (0,1) [colorstyle,label=left:$u_1$]{};
 \node (b2) at (1,1) [colorstyle,label=left:$u_2$]{};
 \node (b3) at (2,1) [colorstyle,label=left:$u_3$]{};
 \node (b4) at (3,1) [colorstyle,label=left:$u_4$]{};
 \node (c) at (1.5,2) [colorstyle,label=above:$w$]{};
 \draw [thick] (a1) -- (a2);
 \draw [thick] (a2) -- (a3);
 \draw [thick] (a3) -- (a4);
 \draw [thick] (b1) -- (a2);
 \draw [thick] (b2) -- (a1);
 \draw [thick] (b2) -- (a3);
 \draw [thick] (b3) -- (a2);
 \draw [thick] (b3) -- (a4);
 \draw [thick] (b4) -- (a3);
 \draw [thick] (c) -- (b1);
 \draw [thick] (c) -- (b2);
 \draw [thick] (c) -- (b3);
 \draw [thick] (c) -- (b4);
\end{tikzpicture}
\end{center}

    \caption{The Mycieslkian of $P_4$}
    \label{fig:muP4}
\end{figure}
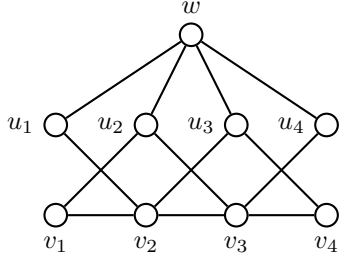

\

\begin{Rem} The new triangles in $\mu(G)$ are of the form $u_iv_jv_k$ where $v_iv_jv_k$ are triangles in $G$. Thus, if G is triangle free, so is $\mu(G)$.
\end{Rem}

\

\begin{Rem} For any graph $G$, the vertices $\{u_1,u_2,\dots,u_n\}$ form an independent set. Hence the graph $\mu(G)$ is not complete. Any graph $G$ therefore satisfies $\gamma_s(\mu(G)) \geq 2$.  
\end{Rem}

There is only one case in which a Mycielskian is not connected; this is when the original graph consists of a single vertex. This result is demonstrated as corollary to the following useful lemma.

\begin{lemma}\label{spanningBound} If H is a spanning subgraph of G, then $\gamma_s(\mu(G)) \leq \gamma_s(\mu(H))$. 

\begin{proof}
  Since $V(H)=V(G)$, $\mu(H)$ is a spanning subgraph of $\mu(G)$. Thus every secure dominating set of $\mu(H)$ is also a secure dominating set of $\mu(G)$ which implies the claim.
\end{proof}

\end{lemma}

\

\begin{cor}
 $\gamma_s(\mu(G)) = 2$ if and only if $G$ is a trivial graph. 
    \begin{proof}
    If $G$ has an edge, it has $n\ge 2$ vertices. Hence $G$ is a spanning subgraph of $K_n$. By Lemma~\ref{spanningBound}, $3=\gamma_s(\mu(K_n) \leq \gamma_s(\mu(G)$. On the other hand, the Mycielskian of a single vertex is a disconnected graph consisting of a path on two vertices, and a single vertex.
    \end{proof}
\end{cor}

\

\section{General Bounds}

A number of conclusions about $\gamma_s(\mu(G))$ can be drawn by considering properties of dominating and secure dominating sets of the underlying graph $G$. This section collects these results.

\subsection{Dominating Vertices}
Recall that we call a vertex \textit{dominating} if it is adjacent to every other vertex. Complete graphs and stars have dominating vertices, as do other graphs. Indeed, a graph with a dominating vertex must contain a star as a subgraph, not necessarily induced. A graph with a dominating vertex has an (ordinary) domination number of $1$.

\begin{thm}\label{secDomMuDominating}
    Let $G$ be a graph of order $n \geq 2$ with a dominating vertex. Then $\gamma_s(\mu(G)) = 3$.
\end{thm}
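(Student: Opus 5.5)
The proof has two parts: the lower bound $\gamma_s(\mu(G))\ge 3$ and an explicit secure dominating set of size $3$.

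\emph{Lower bound.} Since $G$ has a dominating vertex and order $n\ge 2$, it has an edge, so it is not the trivial graph. By the Remark preceding Lemma~\ref{spanningBound}, $\gamma_s(\mu(G))\ge 2$. The Corollary then says equality holds only when $G$ is trivial, so $\gamma_s(\mu(G))\ge 3$.

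The Corollary's proof leans on $\gamma_s(\mu(K_n))=3$, which is a special case of the present statement, so citing it risks circularity. I would therefore also give a direct argument that no $2$-set $S$ is secure dominating. Because $w$ must be dominated, $S$ contains $w$ or some $u_i$; the vertices of $V(G)$ must also be dominated. I would split into cases:
\begin{itemize}
\item $S=\{w,x\}$;
\item $S=\{u_i,x\}$ with $x\in V(G)$;
\item $S=\{u_i,u_j\}$.
\end{itemize}
In each case I would find an undefended vertex. The key observations are that $u_i$ is not adjacent to $v_i$, and that after swapping out $w$, some $u_k$ is left undominated. This case check is the main obstacle, though each case should be short.

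\emph{Upper bound.} Let $v_1$ be a dominating vertex of $G$, and take $S=\{v_1,u_1,w\}$. This set dominates $\mu(G)$, since $v_1$ dominates $V(G)$ and $w$ dominates $U$. We now check security.
\begin{itemize}
\item For $v_j$ with $j\ne 1$: $v_1v_j\in E(G)$, so $u_1v_j\in E(\mu(G))$. Replacing $u_1$ by $v_j$ gives $\{v_1,v_j,w\}$, which still dominates, since $v_1$ covers $V(G)$ and $w$ covers $U$.
\item For $u_j$ with $j\ne 1$: $u_jv_1$ is an edge. Replacing $v_1$ by $u_j$ gives $\{u_j,u_1,w\}$. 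Here $w$ covers $U$, $u_1$ is adjacent to every $v_k$ with $k\ne1$ (because $v_1$ is dominating), and $u_j$ is adjacent to $v_1$. Hence this set dominates.
\end{itemize}
Thus $S$ is a secure dominating set, so $\gamma_s(\mu(G))\le 3$. Combined with the lower bound, $\gamma_s(\mu(G))=3$.
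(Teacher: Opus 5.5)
Your proposal is correct and matches the paper's proof. You use the same set $\{v_1,u_1,w\}$, your defenders ($u_1$ for each $v_j$, $v_1$ for each $u_j$) are among those the paper lists, and your check of the swapped sets is more explicit than the paper's.

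For the lower bound the paper merely asserts that no $2$-set works. You are right that leaning on the Corollary would be circular, since its proof uses $\gamma_s(\mu(K_n))=3$. Your deferred case check does go through with the observations you name, but not by non-domination alone. The sets $\{w,v_i\}$ and $\{u_i,v_i\}$ dominate $\mu(G)$ whenever $v_i$ is dominating, as does $\{u_1,u_2\}$ when $n=2$. In those cases you must exhibit an undefended vertex: $u_j$ (any $j\neq i$), $v_j$ (any $j\neq i$), and $w$, respectively.
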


\begin{proof}
No subsets of cardinality $1$ or $2$ are secure dominating sets of $\mu(G)$. Let $v_1$ be a dominating vertex of $G$, and consider $S = \{v_1, u_1, w\}$ where $u_1$ is the image of $v_1$. The set $S$ is a secure dominating set. All $u_j, j \neq 1$ are defended by $w$ as well as by $v_1$ and all $v_j, j \neq 1$ are defended by $v_1$ as well as by $u_1$. Thus, $\gamma_s(\mu(G)) = 3$.
\end{proof}

\

A vertex adjacent to all but one other vertices could be considered ``nearly-dominating.'' In this case, we can still draw a conclusion about $\gamma_s(\mu(G))$. Recall that $\Delta(G)$ denotes the maximal vertex degree.

\

\begin{thm}
    If $\Delta(G) = n-2$, then $\gamma_s(\mu(G)) \leq 4$.
\end{thm}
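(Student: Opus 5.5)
The plan is to exhibit an explicit secure dominating set of size $4$, in the spirit of the proof of Theorem~\ref{secDomMuDominating}. Let $v_1$ be a vertex of degree $n-2$ in $G$, and let $v_2$ be the unique vertex of $G$ other than $v_1$ that is not adjacent to $v_1$. Note that $n\ge 3$, since a graph with $n\le 2$ vertices has no vertex of degree $n-2$ unless it is trivial or disconnected. I propose
\[
S=\{v_1,u_1,w,v_2\}.
\]
In what follows, $j$ denotes an index with $j\neq 1,2$; every such $v_j$ is adjacent to $v_1$.

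\emph{Domination.} We check each vertex outside $S$:
\begin{itemize}
\item each $v_j$ is adjacent to $v_1$;
\item each $u_k$ with $k\neq 1$ is adjacent to $w$.
\end{itemize}
So $S$ dominates $\mu(G)$.

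\emph{Security.} I will go through the three types of vertices outside $S$ and name a defender for each.
\begin{itemize}
\item For $v_j$, the defender is $v_1$. The set $\{v_j,u_1,w,v_2\}$ is dominating:
  \begin{itemize}
  \item $u_1$ is adjacent to every $v_k$ with $v_k\sim v_1$, which covers all $v_k$ with $k\ne 1,2$;
  \item $v_1$ is adjacent to $v_j$;
  \item $w$ covers all of $U$.
  \end{itemize}
\item For $u_j$, the defender is again $v_1$, which is adjacent to $u_j$ because $v_j\sim v_1$. The set $\{u_j,u_1,w,v_2\}$ is dominating:
  \begin{itemize}
  \item $u_1$ covers every $v_k$ with $k\neq1,2$;
  \item $u_j$ is adjacent to $v_1$;
  \item $w$ covers $U$.
  \end{itemize}
\item For $u_2$, the defender is $w$. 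The set $\{v_1,u_1,u_2,v_2\}$ is dominating:
  \begin{itemize}
  \item $w$ is adjacent to $u_1$;
  \item every $u_k$ with $k\neq1,2$ is adjacent to $v_1$;
  \item every $v_k$ with $k\neq 1,2$ is adjacent to $v_1$.
  \end{itemize}
\end{itemize}

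The only delicate point, and the step I expect to be the main obstacle, is the defence of $u_2$. The natural choice, mirroring Theorem~\ref{secDomMuDominating}, would be to defend every $u_j$ by $w$. For $j\ne 2$ this fails, because removing $w$ then leaves $u_2$ undominated: $u_2$ is not adjacent to $v_1$, nor to $v_2$, nor to $u_1$. Routing the defence of $u_j$ ($j\neq 2$) through $v_1$ instead, and reserving $w$ for $u_2$ alone, avoids this problem.

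Having checked all vertices outside $S$, we conclude $\gamma_s(\mu(G))\le |S|=4$.
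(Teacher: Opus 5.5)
Your proposal is correct and is essentially the paper's proof. The paper uses the same set $\{v_1,u_1,w,x\}$, where $x$ is the non-neighbour of $v_1$; $v_1$ defends every $v_j$ and every $u_j$ other than the image of $x$, and $w$ defends that image. Your verification of each swapped set is more explicit than the paper's, and you avoid the paper's unjustified assumption that the vertex of degree $n-2$ is unique.
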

\begin{proof}
     Let $v_1$ be the unique vertex of degree $n-2$. Since $G$ is connected, $|G|\geq 4$. Now let $x$ be the unique vertex not adjacent to $v_1$. 
Consider the set $S = \{ v_1, u_1, w, x\}$. We must show that this is a secure dominating set of $\mu(G)$. Clearly $S$ is a dominating set of $\mu(G)$. Let $c \notin S$ be some other vertex. Then we have the following three cases:  
    \begin{enumerate}
    \item $c = v_j, j\neq 1$
    \item $c = u_j, j \neq 1$ and $u_j \neq u_x$, the image of $x$
    \item $c = u_x$ 
    \end{enumerate}
    We now discuss the defenders of $c$ for each case.
    \begin{itemize}
\item If $c = v_j, j \neq 1$ then $v_j$ is securely defended by $v_1$ since $(S \setminus \{v_1\}) \cup \{v_j\} =  \{ v_j, u_1, w, x\}$ is a dominating set of $\mu(G)$. 
 \item $c = u_j, j \neq 1$, then $u_j$ is defended by $v_1$ since $(S \setminus \{v_1\}) \cup \{u_j\} =  \{ u_j, u_1, w, x\}$ is a dominating set of $\mu(G)$.  

 \item If $c = u_x$, then $u_x$ is defended by $w$ since $(S \setminus \{w\}) \cup \{u_x\} =  \{ v_1, u_1, u_x, x\}$ is a dominating set of $\mu(G)$.
 
 Hence S is a secure dominating set of $\mu(G)$. Thus, $\gamma_s(\mu(G)) \leq 4$. 
    \end{itemize}
    
\end{proof}

It is also possible to infer the existence of a dominating vertex in a graph based on secure domination numbers.

\begin{prop}
    
\label{lem:winS}
  If $G$ is not complete of order $n\ge 4$ such that \[ \gamma_s(G) = \gamma_s(\mu(G)) = 3 ,\] then $w \in S$ for every $\gamma_s$-set $S$ of $\mu(G)$.
\end{prop}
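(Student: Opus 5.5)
The plan is to argue by contradiction. Suppose $S$ is a $\gamma_s$-set of $\mu(G)$ with $|S|=3$ and $w\notin S$. The goal is to produce a secure dominating set of $G$ of cardinality at most $2$. This contradicts $\gamma_s(G)=3$; note that even a set of size $1$ is impossible, since by the earlier theorem of \cite{ck} it would force $G$ to be complete. The main tool is the projection $\pi$ with $\pi(v_k)=v_k$ and $\pi(u_k)=v_k$. It satisfies: if $u_k$ is adjacent to $v_j$ in $\mu(G)$, then $v_kv_j\in E(G)$. Consequently, a set of vertices of $\mu(G)$ that dominates $V$ projects to a set dominating $G$, except that $u_k$ does not dominate $v_k$ itself.

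First I restrict the shape of $S$. Since $w\notin S$ must be dominated, $S$ contains some $u_i$. Every $u_\ell\notin S$ has neighbours only $w$ and the $v$'s adjacent (in $G$) to $v_\ell$. Since $U$ is independent and $w\notin S$, each such $u_\ell$ must be dominated by a $V$-vertex of $S$.
\begin{itemize}
\item If $S\subseteq U$, then for $n\ge 4$ some $u_\ell\notin S$ is undominated, so this case is impossible.
\item If $S=\{u_i,u_j,v_x\}$, then $v_x$ is adjacent in $G$ to every $v_\ell$ with $\ell\ne i,j$.
\item Otherwise $S=\{u_i,v_x,v_y\}$.
\end{itemize}
Next, $w$ must be defended. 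No $V$-vertex of $S$ is adjacent to $w$, so the defender is some $u\in S\cap U$, and $(S\setminus\{u\})\cup\{w\}$ is dominating. Here $w$ covers all of $U$, so the two remaining vertices of $S$ must dominate $V$ in $\mu(G)$. Their projections therefore form a dominating set $D$ of $G$ with $|D|\le 2$. In the main case, $D=\{v_x,v_y\}$ in $G$.

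Then I show that $D$ is secure in $G$. Take $v_j\in V(G)\setminus D$. Since $v_j\notin S$ (if $v_j=\pi(u_i)$ it is still a $V$-vertex not in $S$), it has a defender $s\in S$ with $S'=(S\setminus\{s\})\cup\{v_j\}$ dominating $\mu(G)$.
\begin{itemize}
\item If $s$ is the $U$-vertex, the argument of the previous paragraph applied to $S'$ shows that the two $V$-vertices of $S'$ dominate all $u_\ell$ with $\ell\neq i$ and all of $V$ except possibly $v_i$. Hence they dominate $G$, with $v_i$ handled because $v_i$ must also be dominated by a $V$-vertex of $S'$.
\item If $s=v_x$, then $S'=\{u_i,v_y,v_j\}$. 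Each $u_\ell$ with $\ell\ne i$ forces $v_\ell\in N_G(v_y)\cup N_G(v_j)$, and $v_i$ must be dominated by $v_y$ or $v_j$. So $\{v_y,v_j\}=(D\setminus\{v_x\})\cup\{v_j\}$ dominates $G$.
\end{itemize}
In either case $v_x$ or $v_y$ defends $v_j$ in $G$. So $D$ is a secure dominating set of size $2$, the desired contradiction.

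The step I expect to be the main obstacle is the case $S=\{u_i,u_j,v_x\}$ together with the bookkeeping around the special vertices $v_i,v_j$, which are not dominated by their own images. There the natural candidate is $D=\{v_x,\pi(\cdot)\}$ as obtained from the $w$-defence. I would first try to show that $v_x$ is nearly dominating, adjacent to all but at most $v_i,v_j$. The defence requirements for $v_i$ and $v_j$ in $\mu(G)$ then force the relevant adjacencies, showing $\{v_x,v_i\}$ or $\{v_x,v_j\}$ is a secure dominating set of $G$. The fallback is to show that this configuration forces $G$ to be complete or $n\le 3$, both excluded by hypothesis.
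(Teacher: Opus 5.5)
Your main case $S=\{u_i,v_x,v_y\}$ follows the paper's argument: the defence of $w$ forces $\{v_x,v_y\}$ to dominate $G$, and each defence of a $v_j$ transfers to $G$. But the proposal has a genuine gap: the case $S=\{u_i,u_j,v_x\}$ is never proved. You flag it as the main obstacle and give only a tentative plan (show $\{v_x,v_i\}$ or $\{v_x,v_j\}$ is secure in $G$, with a fallback). That plan points the wrong way. For $n\ge 4$ this configuration cannot occur at all, whatever $\gamma_s(G)$ is. The contradiction comes from an attack on a $U$-vertex, which your plan, focused on defending $v_i$ and $v_j$, never considers.

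The missing step is short. Since $n\ge 4$, there are two distinct indices $\ell,m\notin\{i,j\}$. Because $U$ is independent and $w\notin S$, the only neighbour of $u_\ell$ in $S$ is $v_x$, so an attack at $u_\ell$ must be answered by moving $v_x$. But then $\{u_i,u_j,u_\ell\}\subseteq U$ leaves $u_m$ undominated. This is exactly how the paper rules out $|S\cap U|\ge 2$.

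Separately, your first bullet in the main case (defender $s=u_i$) is incoherent as written. The set $(S\setminus\{u_i\})\cup\{v_j\}=\{v_x,v_y,v_j\}$ has three $V$-vertices, not two. It also fails to dominate $w$, since $N(w)=U$. So this sub-case simply cannot occur, which is the observation the paper makes. Replace the bullet with that remark, and your second bullet then completes the main case.
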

\begin{proof}
  Let $S$ be a $\gamma_s$-set of $\mu(G)$ of cardinality 3. If $w \notin S$ then $u_i \in S$ for some $u_i$. Since $n\ge 4$, $S$ cannot contain two vertices from the set $U$, which is the set of images of the vertices in $G$. If so, there are at least two remaining vertices of $U$ to be defended by the remaining one vertex from $S$ which is a contradiction. So $S$ must contain two vertices from $V(G)$ and exactly one from $U$. Since $w$ has to be defended by $u_i$, it means $\mathrm{epn}(u_i,S)= \varnothing$. Each vertex of $V(G)$ not in $S$ is s-defended by one of the two vertices in $S$. Note it cannot be s-defended by $u_i$ as that would leave $w$ undominated. This means that $S \setminus \{u_i\}$ is a secure dominating set of $G$, a contradiction since we are given $\gamma_s(G) = 3$. Hence $w \in S$.
\end{proof}

With the preceding lemma in hand, we can draw a conclusion about dominating vertices in certain graphs.

\begin{prop}
  If $G$ is not complete of order $n \ge 4$ such that \[ \gamma_s(G) = \gamma_s(\mu(G)) = 3,\] then $G$ has a dominating vertex.
\end{prop}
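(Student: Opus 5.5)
The plan is to start from Proposition~\ref{lem:winS}: every $\gamma_s$-set $S$ of $\mu(G)$ has the form $S=\{w,a,b\}$. Since $w$ is adjacent only to vertices of $U$, the two vertices $a,b$ must by themselves dominate every $v_j\notin S$. Recall that in $\mu(G)$ a vertex $u_i$ dominates exactly $w$ and $N_G(v_i)\subseteq V(G)$, while $v_i$ dominates $N_G[v_i]$ together with $\{u_k : v_k\in N_G(v_i)\}$. I will split into three cases according to $|S\cap V(G)|\in\{0,1,2\}$. In each case I will either exhibit a dominating vertex of $G$ or produce a secure dominating set of $G$ of size at most $2$. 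The latter contradicts $\gamma_s(G)=3$ by the same mechanism as in the proof of Proposition~\ref{lem:winS}.

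The main tool is to look at how the vertices of $U\setminus S$ are defended; there are at least $n-2\ge 2$ of them. For $u_j\notin S$ there are three possible swaps.
\begin{itemize}
\item Swapping out $w$ gives $\{u_j,a,b\}$. Because $U$ is independent, every $u_k$ with $k\neq j$ that is not in $S$ must then be dominated by $a$ or $b$ lying in $V(G)$. This forces $N_G(a)\cup N_G(b)\supseteq V(G)\setminus\{v_j\}$, up to the at most two indices of $S$.
\item Swapping out $a$ gives $\{w,u_j,b\}$, which must dominate $V(G)$. So $N_G(v_j)$ together with what $b$ dominates must cover $V(G)$. In particular $v_j$ itself must be dominated by $b$, since $u_j\not\sim v_j$.
\item Swapping out $b$ is symmetric to swapping out $a$.
\end{itemize}

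First case: $S=\{w,v_a,v_b\}$. I will show that if neither $v_a$ nor $v_b$ is dominating in $G$, then $\{v_a,v_b\}$ is a secure dominating set of $G$. Domination of $G$ is already given. For $v_k\notin S$, its defender in $\mu(G)$ is $v_a$ or $v_b$. It cannot be $w$, because then $\{v_a,v_b,v_k\}$ would have to dominate all of $U$, including $u_a$ and $u_b$; that forces adjacency conditions which, combined with the $u_j$-defenses above, should make one of $v_a,v_b$ adjacent to everything. A swap inside $\{v_a,v_b\}$ that dominates $V(G)\cup U$ in $\mu(G)$ restricts to a valid swap in $G$, giving the contradiction.

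Remaining cases. If $S=\{w,u_a,u_b\}$, then $v_a$ must be adjacent to $v_b$, and the $u_j$-defenses cannot swap out $w$ (some other $u_k$ would be undominated once $n\ge 4$). They must therefore swap out $u_a$, which forces $N_G(v_j)\cup N_G(v_b)=V(G)\setminus\{\ldots\}$ for every $j\neq a,b$. I expect to derive a contradiction or a dominating vertex from this. In the mixed case $S=\{w,v_a,u_b\}$, vertex $v_a$ must dominate everything in $V(G)$ outside $N_G(v_b)$. I will try to show that $v_a$ is dominating, or else that $\{v_a,v_b\}$ is secure in $G$.

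The main obstacle is the bookkeeping in the first case. I have to rule out every configuration in which $w$ is the defender of some $v_k$ while neither $v_a$ nor $v_b$ is universal. I will attack it first by using $u_a,u_b$: they must stay dominated after the swap, which forces strong adjacency between $v_a,v_b$ and $v_k$.
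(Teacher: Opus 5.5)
Your case split by $|S\cap V(G)|$ is the right one; the paper uses the same split. But none of the three cases is actually carried out. The only case that can occur, $S=\{w,v_i,u_k\}$, is where the dominating vertex must come from, and you leave it as ``I will try to show''.

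The missing idea is that a guard may only move to an \emph{adjacent} vertex. The paper uses secure domination this way throughout, and the proof of Proposition~\ref{lem:winS}, which you invoke, depends on it. Three of your steps ignore adjacency: your ``three possible swaps'' for $u_j$, your option of $w$ defending $v_k$, and your claim that in $\{w,u_a,u_b\}$ the defence of $u_j$ swaps out $u_a$. Under that reading the cases do not close by bookkeeping. In $\mu(C_4)$ the set $\{w,u_1,u_2\}$ dominates, and every attack is answered by some swap (e.g.\ $u_3$ by removing $u_1$, $u_4$ by removing $u_2$). Yet $C_4$ has no dominating vertex. So in your second case the swap conditions alone give neither a contradiction nor a dominating vertex, and your plan does not say how $\gamma_s(G)=3$ would enter there.

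With adjacency each case takes a line.

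\begin{itemize}
\item If $S=\{w,v_a,v_b\}$, then $w\not\sim v_k$, so each $v_k\notin S$ is defended by $v_a$ or $v_b$. Since $w$ still covers $U$ after such a swap, the swap is valid in $\mu(G)$ exactly when it is valid in $G$. Hence $\{v_a,v_b\}$ is secure in $G$, contradicting $\gamma_s(G)=3$. Your conditional on $v_a,v_b$ not being dominating is unnecessary.
\item If $S=\{w,u_a,u_b\}$, each $u_j$ with $j\notin\{a,b\}$ has $w$ as its only neighbour in $S$. Replacing $w$ by $u_j$ leaves some $u_r$, $r\notin\{a,b,j\}$, undominated because $n\ge 4$.
\item If $S=\{w,v_i,u_i\}$, then $V(G)\setminus\{v_i\}$ can only be dominated through $N_G(v_i)$, so $v_i$ is dominating.
\item If $S=\{w,v_i,u_k\}$ with $k\neq i$, then $v_k$ is adjacent neither to $u_k$ nor to $w$, so $v_k\sim v_i$. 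Also $u_i$ is adjacent neither to $v_i$ nor to $u_k$, so $w$ must move to $u_i$. After that move, every $u_r$ with $r\neq i,k$ must be dominated by $v_i$, i.e.\ $v_r\sim v_i$. Thus $v_i$ is a dominating vertex.
\end{itemize}

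This attack at $u_i$, the image of the guarded vertex $v_i$, is the step your proposal lacks.
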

\begin{proof}
  From Proposition ~\ref{lem:winS} we know that any secure dominating set $S$ of $\mu(G)$ contains the vertex $w$ and so $S$ contains two vertices from $V(G) \bigcup U$. Note that $S$ cannot contain two vertices from $V(G)$ as that would mean $S$ is secure in $V(G)$, contradicting that $\gamma_s(G) = 3$. Hence $S$ contains one vertex $v_i$ from $V(G)$, one vertex, say $u_k$, from $U$ and
  $w$. That is ${S = \{v_i,u_k,w\}}$.

  We want to show that $v_i$ is a dominating vertex in G. Since $v_k$ is not adjacent to $u_k$ it means $v_k$ must be adjacent to $v_i$. Also $u_i$ is neither adjacent to $v_i$ nor to $u_k$ so $u_i$ must be s-defended by $w$. This means when $w$ is replaced by $u_i$ then the set $\{v_i, u_i, u_k\}$ must be a dominating set of $\mu(G)$. So all other $u_r$ where $r \ne i, k$ must be adjacent to $v_i$. By the construction of $\mu(G)$, this means $v_r$ where $r \ne i, k$ must be adjacent to $v_i$. Since $v_k$ is already shown to be adjacent to $v_i$, all this shows that $v_i$ is a dominating vertex of $G$.
  
  We now want to show that the set $S = \{v_i, u_k, w\}$ is a secure dominating set of $\mu(G)$ if $v_k$ (the pre-image of $u_k$) is also a dominating vertex in $G$. If $v_k$ is not dominating then the vertex $v_r$ not adjacent to $v_k$ will be left undominated when $v_i$ is replaced by $v_k$. Hence the result. Thus a minimum secure dominating set of $\mu(G)$ is of the type $\{v_i, u_k, w\}$ where $v_i$ and $v_k$ (the pre-image of $u_k$) is a dominating vertex in $G$.
\end{proof}
\subsection{$S$-isolates}

In this section, we give a sufficient condition under which a secure dominating set $S \subset G$ implies that $S \cup \{w\}$ is a secure dominating set of $\mu(G)$. We do so using the language of ``$S$-isolates."

\begin{definition}
    Let $S$ be a subset of vertices of $G$, and let $v$ be a vertex in $S$. We call the vertex $v$ an \textit{$S$-isolate} if $v$ is an isolate in the subgraph induced by $S$. That is, $N(v) \cap S = \emptyset$. 
\end{definition}

\

$S$-isolates in a graph $G$ can be described as external private neighbors of the vertex $w \in \mu(G)$. The specific claim is the following one.

\begin{lemma}
    Let $S$ be a dominating set of $G$, considered also as a subset of vertices of the graph $\mu(G)$. The external private neighbors of $w \in \mu(G)$ with respect to $S \cup \{w\}$ is the set of images of $S$-isolates (in $G$).
    
    That is,
    \[ \epn(w,S\cup\{w\}) = \{ u_i, \text{ where }~ v_i\in S \mid N(v_i) \cap S = \emptyset \} \]
\end{lemma}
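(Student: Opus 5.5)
We will prove the set equality by a direct double inclusion, unwinding the definition of an external private neighbor. Recall that for $T \subseteq V(\mu(G))$ and $x \in T$, a vertex $y$ lies in $\epn(x,T)$ exactly when $y \notin T$, $y$ is adjacent to $x$, and $y$ is adjacent to no other vertex of $T$. Take $T = S \cup \{w\}$, where $S \subseteq \{v_1,\dots,v_n\}$ is a dominating set of $G$.

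First we identify the candidates. By the construction of $\mu(G)$, the neighborhood of $w$ is exactly $U=\{u_1,\dots,u_n\}$. Since $S \subseteq V(G)$, no $u_i$ lies in $T$. Hence $\epn(w,T)$ is the set of those $u_i$ that have no neighbor in $S$; no neighbor in $T\setminus\{w\}=S$ is the only condition left to check.

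Next we compute the neighbors of $u_i$ inside $V(G)$. By the definition of $\mu(G)$, $u_i$ is adjacent to $v_j$ precisely when $v_iv_j \in E(G)$. Also, $u_i$ is not adjacent to $v_i$ itself, and $U$ is independent. So $N_{\mu(G)}(u_i) \cap S = \{v_j : v_j \in N_G(v_i)\cap S\}$. Therefore $u_i \in \epn(w,T)$ if and only if $N_G(v_i) \cap S = \emptyset$.

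It remains to show this forces $v_i \in S$, which is the only place the domination hypothesis enters and the one step that needs care. If $v_i \notin S$ and $N_G(v_i)\cap S=\emptyset$, then $v_i$ would not be dominated by $S$, contradicting that $S$ dominates $G$. Hence every $u_i \in \epn(w,T)$ has $v_i \in S$ with $N(v_i)\cap S=\emptyset$, that is, $v_i$ is an $S$-isolate.

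Conversely, if $v_i\in S$ is an $S$-isolate, then the previous characterization gives $u_i \in \epn(w,T)$. This gives both inclusions and proves the displayed equality.
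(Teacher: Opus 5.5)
Your proof is correct and follows essentially the same double-inclusion argument as the paper: use $N(w)=U$, identify $N_{\mu(G)}(u_i)\cap S$ with $N_G(v_i)\cap S$, and invoke domination of $S$ to force $v_i\in S$. Your explicit observation that no $u_i$ lies in $S\cup\{w\}$ (since $S\subseteq V(G)$) fills in a detail the paper leaves implicit.
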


\begin{proof}
Let $S \subset G$ a dominating set of $G$.

Let $c \in \epn(w, S \cup \{w\})$. Since $N(w) = U$, $c = u_i$ for some $i.$ Since $S$ is a dominating set of $G$, either $v_i \in S$ or $N(v_i) \cap S \neq  \emptyset.$ 

Since $c =u_i \in \epn(w, S \cup \{w\})$, it follows that $N(u_i) \cap S =\emptyset$. Hence, by the construction of $\mu(G)$, $ N(v_i) \cap S = \emptyset$. Since S is a dominating set, this means that $v_i \in S$. Also, $N(v_i)\cap S = \emptyset$ means that $v_i$ is an isolate of $S$. 

Now let $u_i \in \{u_k \in U \mid N(v_k) \cap S = \emptyset\}$. That is, $u_i$ is the image of an S-isolate $v_i$. Since $N(u_i) \cap S = N(v_i) \cap S = \emptyset$ this means that $u_i$ is not agacent to any vertex of $S$. We know that $u_i$ is a neighbor of $w$, and thus $u_i$ is in $\epn(w, S \cup \{w\})$. 
\end{proof}

\begin{figure}[H]
    \centering
    \begin{center}
      \begin{tikzpicture}[scale=0.9, colorstyle/.style={circle,draw=black!100,thick,inner sep=2pt, minimum size=3mm}]
 \node (a1) at (0,0) [colorstyle,label=below:$v_1$]{};
 \node (a2) at (1,0) [colorstyle,fill=black,label=below:$v_2$]{};
 \node (a3) at (2,0) [colorstyle,label=below:$v_3$]{};
 \node (a4) at (3,0) [colorstyle,fill=black,label=below:$v_4$]{};
 \node (a5) at (4,0) [colorstyle,label=below:$v_5$]{};
 \node (b1) at (0,1) [colorstyle,label=left:$u_1$]{};
 \node (b2) at (1,1) [colorstyle,fill=red,label=left:$u_2$]{};
 \node (b3) at (2,1) [colorstyle,label=left:$u_3$]{};
 \node (b4) at (3,1) [colorstyle,fill=red,label=left:$u_4$]{};
 \node (b5) at (4,1) [colorstyle,label=right:$u_5$]{};
 \node (c) at (2,2) [colorstyle,label=above:$w$]{};
 \draw [thick] (a1) -- (a2);
 \draw [thick] (a2) -- (a3);
 \draw [thick] (a3) -- (a4);
 \draw [thick] (a4) -- (a5);
 \draw [thick] (b1) -- (a2);
 \draw [thick] (b2) -- (a1);
 \draw [thick] (b2) -- (a3);
 \draw [thick] (b3) -- (a2);
 \draw [thick] (b3) -- (a4);
 \draw [thick] (b4) -- (a3);
 \draw [thick] (b4) -- (a5);
 \draw [thick] (b5) -- (a4);
 \draw [thick] (c) -- (b1);
 \draw [thick] (c) -- (b2);
 \draw [thick] (c) -- (b3);
 \draw [thick] (c) -- (b4);
 \draw [thick] (c) -- (b5);
\end{tikzpicture}
\end{center}

    \caption {A dominating set in $P_5$ and private neighbors of $w$}
    \label{fig:enter-label}
\end{figure}
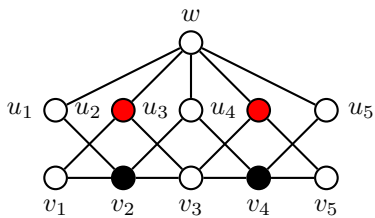

Proposition~\ref{Swepn}, to be considered shortly, indicates the significant of isolates in dominating sets for the question of secure domination Mycielskians. In the investigation of that relation, the following result, which makes no use of Mycielskians, was discovered, and could be of independent interest. 

\begin{prop} If a minimal secure dominating set $S$ for a path $P_n$ with $n \geq 6$ has no isolates, then $S$ contains no set of three adjacent vertices.
\end{prop}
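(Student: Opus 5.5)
Assume, for contradiction, that $S$ is a minimal secure dominating set of $P_n=v_1v_2\cdots v_n$ ($n\ge 6$) with no $S$-isolates, and that $v_{i-1},v_i,v_{i+1}\in S$. The plan is to show that $S'=S\setminus\{v_i\}$ is still a secure dominating set, contradicting minimality.

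\textbf{Step 1: $S'$ is dominating.} The only vertices whose domination could be lost are $v_i$ and its neighbours. The vertices $v_{i\pm1}$ remain in $S'$, and $v_i$ is adjacent to both of them.

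\textbf{Step 2: every $x\notin S$ is still defended.} Let $x\notin S$ be defended in $S$ by some $y\in S$. Then $y\neq v_i$, since both neighbours of $v_i$ lie in $S$. We check that $S'\setminus\{y\}\cup\{x\}$ is dominating. Deleting $v_i$ can only harm domination of $v_{i-1},v_i,v_{i+1}$.
\begin{itemize}
\item If $y\notin\{v_{i-1},v_{i+1}\}$, all three of these vertices are in or next to $v_{i\pm1}$, which are still present.
\item If $y=v_{i-1}$, then $x=v_{i-2}$. Now $v_{i-1}$ is dominated by $x$, and $v_i$ by $v_{i+1}$. The case $y=v_{i+1}$ is symmetric.
\end{itemize}

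\textbf{Step 3: the new outside vertex $v_i$ must be defended.} Swapping $v_i$ in for $v_{i-1}$ gives $S\setminus\{v_{i-1}\}$. This is dominating unless $v_{i-2}$ is dominated in $S$ only by $v_{i-1}$. Symmetrically, swapping with $v_{i+1}$ works unless $v_{i+2}$ is dominated only by $v_{i+1}$. So the deletion argument succeeds whenever the run of $S$-vertices has length at least $4$: just remove an interior vertex both of whose neighbours lie in $S$, and Step 3 is automatic. It also succeeds whenever one of $v_{i\pm2}$ is in $S$ or has a second dominator $v_{i\pm3}\in S$.

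\textbf{Main obstacle: the bad local configuration.} What remains is
\[
v_{i-3},v_{i-2}\notin S,\qquad v_{i-1},v_i,v_{i+1}\in S,\qquad v_{i+2},v_{i+3}\notin S,
\]
up to the ends of the path. This is where $n\ge 6$ and the no-isolate hypothesis are needed.
\begin{itemize}
\item \emph{Near an end of the path.} If, say, $i-2=1$, the configuration is $v_1\notin S$, $v_2,v_3,v_4\in S$. Then the endpoint analysis is direct: $v_1$ is a leaf, and I would verify the swap conditions by hand.
\item \emph{In the interior.} Here $v_{i-3}$ must be dominated by $v_{i-4}\in S$. Since $S$ has no isolates, $v_{i-5}\in S$ as well.
\end{itemize}
In the interior case I would first try to show that such a pair $v_{i-5},v_{i-4}$ cannot defend $v_{i-3}$ and $v_{i-2}$ simultaneously with the triple. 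If that fails, I would fall back on an exchange argument. Replace $v_i$ by a vertex further out, for instance move the triple to $v_{i-1},v_{i+1},v_{i+2}$. The aim is either to produce a strictly smaller secure dominating subset, or to show the configuration forces an $S$-isolate, contradicting the hypothesis. I expect this case analysis to be the delicate part of the proof.
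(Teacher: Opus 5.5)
\paragraph{There is a genuine gap.} Your Steps 1--3 are correct, but the bad configuration you leave open cannot be closed by deleting a vertex. It really does occur in inclusion-minimal, isolate-free secure dominating sets.

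Take $P_8$ and $S=\{v_1,v_2,v_5,v_6,v_7\}$. It is secure dominating: $v_3$, $v_4$, $v_8$ are defended by $v_2$, $v_5$, $v_7$ respectively. It has no isolates and contains the triple $v_5v_6v_7$. Yet no vertex can be deleted:
\begin{itemize}
\item removing $v_2$, $v_5$ or $v_7$ leaves $v_3$, $v_4$ or $v_8$ undominated;
\item after removing $v_1$, the only possible defender of $v_1$ is $v_2$, and moving it strands $v_3$;
\item after removing $v_6$, defending $v_6$ from $v_5$ strands $v_4$, and defending it from $v_7$ strands $v_8$.
\end{itemize}
Supersets of secure dominating sets are secure dominating, so checking single deletions suffices, and this $S$ is inclusion-minimal. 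This is exactly your interior case with $i=6$. The pair $v_1,v_2$ \emph{does} defend $v_3,v_4$ alongside the triple, and no isolate is forced. The end case fails the same way: $S=\{v_2,v_3,v_4,v_7,v_8\}$ in $P_9$. So under the reading your deletion strategy relies on, the statement is false.

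\paragraph{The missing idea.} ``Minimal'' has to mean minimum cardinality. In the bad configuration, the contradiction comes from an exchange that produces a smaller secure dominating set which is \emph{not} a subset of $S$. This is what the paper does: it removes the middle guard of the triple and shifts the nearest guard across the gap of two one step inward.

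In your notation, suppose $v_{i-5},v_{i-4}\in S$ and $v_{i-3},v_{i-2}\notin S$. Put $T=(S\setminus\{v_i,v_{i-4}\})\cup\{v_{i-3}\}$. Then $|T|=|S|-1$, and $T$ is secure dominating:
\begin{itemize}
\item $v_i$ is defended by $v_{i-1}$, since $v_{i-2}$ is now dominated by $v_{i-3}$;
\item $v_{i-2}$ is defended by $v_{i-1}$, and $v_{i-4}$ by $v_{i-3}$;
\item all other defences carry over from $S$.
\end{itemize}
In the example this gives $T=\{v_1,v_3,v_5,v_7\}$, of size $\gamma_s(P_8)=4$.

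Your suggested fallback, moving the triple to $v_{i-1},v_{i+1},v_{i+2}$, keeps $|S|$ fixed and so proves nothing. Likewise, a ``strictly smaller secure dominating subset'' need not exist, as the example shows.

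\paragraph{Where $n\ge 6$ enters.} In the bad configuration, each side of the triple is either an end of the path or a gap of two followed by a guarded pair. Both sides are ends only for $P_5$ with $S=\{v_2,v_3,v_4\}$, which is a genuine minimum counterexample. With this exchange replacing your deletion in the bad case, your Steps 1--3 give a complete proof, since a smaller subset also contradicts minimum cardinality.
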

\begin{proof}
  Consider the contrapositive. Enumerate the vertices $v_1, v_2, \cdots, v_n$. It is
  immediate that a minimal secure dominating set will not contain \{$v_1, v_2, v_3\}$.
  Now consider an arbitrary $\{v_j, v_{j+1}, v_{j+2}\}$. Since $n\geq 6$ there is at least one other guard. Suppose the guard is “to the right,” i.e. at $v_{j+5}$. If this guard
  is not lonely, i.e. if $v_{j+6}$ is also guarded, then we can remove $v_{j+1}$ and still
  have a secure dominating set (by moving the guard at $v_{j+5}$ to $v_{j+4}$), which
  contradicts minimality.
\end{proof}

\begin{prop}\label{Swepn}
    Let $G$ be a connected graph of order at least 2. If $S$ is a secure dominating set of $G$, then 
    \[ S_p = S \cup \epn(w,S\cup\{w\}) \cup \{w\}\] is a secure dominating set of $\mu(G)$.
\end{prop}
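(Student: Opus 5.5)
The plan is to verify the two defining properties of a secure dominating set directly for $S_p$: first that $S_p$ dominates $\mu(G)$, and then that every vertex outside $S_p$ has a defender. Throughout, write $I = \epn(w, S\cup\{w\})$. By the preceding lemma, $I$ is exactly the set of images $u_i$ of $S$-isolates $v_i \in S$. The one fact used repeatedly is the neighbourhood correspondence in $\mu(G)$: $u_k$ is adjacent to $v_j$ if and only if $v_k v_j \in E(G)$. In particular, $N(u_k)\cap S = N(v_k)\cap S$ for $S \subseteq V(G)$.

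\emph{Domination.} Every $v_j \in V(G)$ is dominated because $S \subseteq S_p$ dominates $G$. Every $u_j \in U$ is adjacent to $w \in S_p$. Thus $S_p$ is a dominating set of $\mu(G)$.

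\emph{Security.} Since $w\in S_p$, a vertex outside $S_p$ is either some $v_j \notin S$ or some $u_j \notin I$. I treat the two cases separately.

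\begin{enumerate}
\item Let $v_j\notin S$. Since $S$ is secure in $G$, there is $v \in S$ such that $(S\setminus\{v\})\cup\{v_j\}$ dominates $G$. I claim that $(S_p\setminus\{v\})\cup\{v_j\}$ dominates $\mu(G)$.
\begin{itemize}
\item The vertices of $V(G)$ are dominated by $(S\setminus\{v\})\cup\{v_j\}$.
\item All of $U$ is dominated by $w$, which is still present.
\item $w$ itself is still in the set.
\end{itemize}
Hence $v$ defends $v_j$.

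\item Let $u_j\notin I$. I will use $w$ as the defender, i.e.\ show that $T=(S_p\setminus\{w\})\cup\{u_j\}$ dominates $\mu(G)$.
\begin{itemize}
\item $V(G)$ is dominated by $S \subseteq T$.
\item $w$ is adjacent to $u_j \in T$.
\item Take any $u_k \in U$ with $u_k \notin T$. If $v_k$ has a neighbour $v_\ell\in S$, then $u_k$ is adjacent to $v_\ell \in T$.
\item Otherwise $N(v_k)\cap S=\emptyset$. Since $S$ dominates $G$, this forces $v_k\in S$, so $v_k$ is an $S$-isolate. Then $u_k\in I\subseteq T$, contradicting $u_k\notin T$.
\end{itemize}
Hence every vertex of $U$ is dominated, and $w$ defends $u_j$.
\end{enumerate}

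The only step with any content is the second case: removing $w$ leaves vertices of $U$ potentially undominated. The set $I$ has been added precisely to cover those vertices, and the lemma on $\epn(w,S\cup\{w\})$ is what identifies them. Everything else is routine bookkeeping with the adjacency rule of $\mu(G)$. The hypotheses that $G$ is connected with order at least $2$ are only needed to ensure that the setting is nondegenerate.
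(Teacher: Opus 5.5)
Your proof is correct and follows the paper's argument. The same $G$-defender handles attacks on $V(G)\setminus S$ because $w$ still covers $U$, and $w$ defends every attacked image vertex. Your explicit check that $(S_p\setminus\{w\})\cup\{u_j\}$ dominates $\mu(G)$ fills in a step the paper only asserts: an image $u_k$ with no neighbour in $S$ must come from an $S$-isolate, so it lies in $\epn(w,S\cup\{w\})$.
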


\begin{proof}
    Since $S$ is a dominating set of $G$ and $N[w] = \{w\} \cup U$, it follows that $S \cup \{w\} \subset S_p$ is a dominating set of $\mu(G).$ To show that $S_p$ is a secure dominating set of $\mu(G)$, let $c \notin S_p$ be some other vertex. Since $w \in S_p$, it must be that $c \in V(G) -S$ or $c \in U$. 

    If $c \in V(G)-S$, $c$ has the same $s$-defender in $\mu(G)$ with respect to $S_p$ as in $G$ with respect to $S$. That is, since $S$ is a secure dominating set of $G$, there is an $s \in N(c) \cap S$ such that $N[S - \{s \} \cup \{c\}]= V(G)$. Thus, in $\mu(G)$, 
    
    \begin{align*}
        S_p - \{s\} \cup \{c\} & = \{w\} \cup S - \{s\} \cup \{c\} \cup \epn(w, S \cup \{w\}) \\
        & \supset N_{\mu(G)}[w] \cup N_{\mu(G)}[S - \{s\} \cup \{c\} ] \\
        & = \{w\} \cup U \cup V(G) 
    \end{align*}

If $c \in U$, then note $w$ securely defends $c$ since $\{c \} \cup \epn(w, S \cup \{w\}) \cup S$ is a dominating set of $\mu(G)$.
\end{proof}

\begin{cor} \label{SandIsos}
Let $S \subset G$ be a secure dominating set. Let $I_S$ be the set of $S$-isolates in $S$. Then 
    \[ \gamma_s(\mu(G)) \leq |S| + |I_S| + 1  \]
    
\end{cor}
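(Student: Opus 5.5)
This corollary follows directly from Proposition~\ref{Swepn} combined with the lemma identifying $\epn(w,S\cup\{w\})$ with the images of $S$-isolates. The plan is to exhibit the explicit secure dominating set $S_p = S \cup \epn(w,S\cup\{w\}) \cup \{w\}$ of $\mu(G)$ and bound its cardinality.

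First, note that $S$ is a secure dominating set of $G$, so in particular it is a dominating set. Hence the lemma on external private neighbors of $w$ applies and gives
\[ \epn(w,S\cup\{w\}) = \{ u_i \mid v_i \in S,\ N(v_i)\cap S = \emptyset \}, \]
which is exactly the set of images of the elements of $I_S$. The map $v_i \mapsto u_i$ is injective, so $|\epn(w,S\cup\{w\})| = |I_S|$.

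Next, apply Proposition~\ref{Swepn}, assuming $G$ is connected of order at least $2$ as in that proposition; the trivial graph is handled separately by the earlier corollary. The proposition says $S_p$ is a secure dominating set of $\mu(G)$, so $\gamma_s(\mu(G)) \le |S_p|$. The three pieces $S \subseteq V(G)$, $\epn(w,S\cup\{w\}) \subseteq U$ and $\{w\}$ lie in the pairwise disjoint vertex classes $V(G)$, $U$, $\{w\}$ of $\mu(G)$. Therefore $|S_p| = |S| + |I_S| + 1$, which is the claimed bound.

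There is no real obstacle here. The only points needing care are checking the disjointness, so that the cardinalities add exactly, and confirming that the hypotheses of Proposition~\ref{Swepn} and of the $\epn$ lemma are satisfied.
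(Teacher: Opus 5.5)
Your proposal is correct and follows the paper's proof: use the lemma to identify $\epn(w,S\cup\{w\})$ with the images of the $S$-isolates, apply Proposition~\ref{Swepn} to $S_p$, and count its elements. Your explicit checks that $v_i \mapsto u_i$ is injective, and that $S$, $\epn(w,S\cup\{w\})$ and $\{w\}$ lie in the disjoint classes $V(G)$, $U$ and $\{w\}$, make precise the cardinality step that the paper leaves implicit.
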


\begin{proof}
Note $epn(w, S \cup \{w\}) = \{u_i \mid N(v_i) \cap S = \emptyset \}$. 
We use Proposition \ref{Swepn} to note that 
\begin{align*}
S \cup \epn(w,S\cup \{w\}) \cup \{w\} & = S \cup \{u_i \mid  N(v_i) \cap S = \emptyset\} \cup \{w\}
\end{align*}
is a secure dominating set of $\mu(G)$. Hence the secure domination number of $\mu(G)$ is at most $|S|+|I_S|+1$. \end{proof}

We are therefore interested in isolates because their absence indicates that the Mycielskian of a graph has a simple secure dominating set: the union of the given secure dominating set of $G$ with the central vertex $w$ of the Mycielski construction.

\nocite{*}

\subsection{Ordinary Domination Bound}
By ``doubling'' an ordinary dominating set, it is possile to obtain an upper bound for the secure domination number of the Mycielskian. This result appears as Theorem 11 in \cite{NaRa}.

\begin{prop}\label{prop:domBound} \cite{NaRa}
  $\gamma_s(\mu(G)) \le 2\gamma(G) + 1$
\end{prop}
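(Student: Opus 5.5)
We take a minimum dominating set of $G$, double it by adding its images, and add the central vertex $w$. That is, we fix a $\gamma$-set $D$ of $G$ and set
\[ S = D \cup \{u_i \mid v_i \in D\} \cup \{w\}, \]
so that $|S| = 2\gamma(G)+1$. It then suffices to show that $S$ is a secure dominating set of $\mu(G)$.

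\textbf{Domination.} $D$ dominates $V(G)$ inside $G \subseteq \mu(G)$. The vertex $w$ dominates $U$, since $N(w)=U$. Also $w \in S$. Hence $S$ dominates $\mu(G)$.

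\textbf{Security for $c = v_j \in V(G)\setminus D$.} Since $D$ dominates $G$, there is some $v_i \in D$ with $v_iv_j \in E(G)$. By construction $u_iv_j \in E(\mu(G))$, so $u_i$ is a neighbour of $c$ in $S$. We propose $u_i$ as the defender and check that $S' = (S\setminus\{u_i\})\cup\{v_j\}$ dominates $\mu(G)$:
\begin{itemize}
\item $D \subseteq S'$ still dominates $V(G)$;
\item $w \in S'$ dominates all of $U$, in particular the vacated vertex $u_i$;
\item $w$ itself lies in $S'$.
\end{itemize}
So $S'$ dominates, and $u_i$ securely defends $v_j$. (Using $v_i$ as the defender would be riskier, since it might leave vertices of $G$ undominated.)

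\textbf{Security for $c = u_j \in U\setminus S$.} Here $v_j \notin D$, and we propose $w$ as the defender. Let $S'' = (S\setminus\{w\})\cup\{u_j\}$.
\begin{itemize}
\item $D$ still dominates $V(G)$.
\item $w$ is dominated by $u_j \in S''$.
\item The images $u_k$ with $v_k \in D$ lie in $S''$.
\item Any other $u_k$ has $v_k \notin D$, so $v_k$ has a neighbour $v_i \in D$. Then $u_kv_i \in E(\mu(G))$, so $u_k$ is dominated by $v_i \in S''$.
\end{itemize}
Hence $S''$ dominates, and $w$ securely defends $u_j$.

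The main point needing care is this last case, since removing $w$ could in principle expose many vertices of $U$. The key fact is that $N(u_k)\cap V(G) = N_G(v_k)$, so the domination of $V(G)$ by $D$ transfers to the images of vertices outside $D$. The images of vertices in $D$, which this transfer does not cover, are exactly the vertices we put into $S$; this is why $D$ is doubled.

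Combining the domination check with the two security cases shows that $S$ is a secure dominating set of $\mu(G)$. Therefore $\gamma_s(\mu(G)) \le |S| = 2\gamma(G)+1$.
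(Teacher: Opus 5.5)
Your proposal is correct and uses the same construction as the paper: a (minimum) dominating set of $G$, together with its images and the cone vertex $w$. The paper only asserts that this set is secure dominating, so your explicit check supplies the verification the paper omits: you defend $v_j$ by the image $u_i$ of a dominating neighbour, and you defend $u_j$ by $w$ using $N(u_k)\cap V(G)=N_G(v_k)$.
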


\begin{proof}
  Let $S$ be a dominating set of $G$. Let $S'$ be the set of images of those vertices. Then ${S \bigcup S' \bigcup \{w\}}$ is a secure dominating set of $\mu(G)$.
\end{proof}

The upper bound is attained for the path on six vertices.

\section{Special Families}
We compute here the secure domination numbers of Mycielskians of familiar graph families.

\subsection{Paths and Cycles}

The secure domination number of the \textit{generalized } Mycielskian of path graphs has been published for path graphs of small order \cite{NaRa}. 
 We give the general result for the (standard) Mycielskian of paths of arbitrary order.

Recall Theorem~\ref{secDomPath}; the secure domination number of a path on $n$ vertices is $\lceil \frac{3n}{7} \rceil$. A minimal secure dominating set for such a path comes from taking three non-adjacent vertices out of every seven. The isolation of these vertices indicates that the bound coming from Corollary \ref{SandIsos} is not particularly good. In fact, we can do much better by pairing vertices. A slightly larger secure dominating set of $P_n$ yields a much smaller secure dominating set of $\mu(P_n)$.

The path $P_{7}$ illustrates this phenomenon. By Theorem~\ref{secDomPath}, ${\gamma_s(P_{7}) = \lceil {\frac{3 \cdot 7}{7}} \rceil = 3}$. Figure \ref{fig:P7inefficient} shows the (unique) minimal secure dominating set of $P_{7}$. 

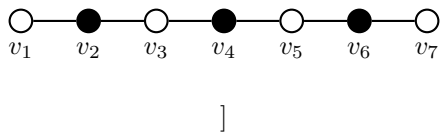
\begin{figure}[h]
    \centering
\begin{center}
\begin{tikzpicture}[scale=0.9, colorstyle/.style={circle,draw=black!100,thick,inner sep=2pt, minimum size=3mm}]
 \node (a1) at (0,0) [colorstyle,label=below:$v_1$]{};
 \node (a2) at (1,0) [colorstyle,fill=black!100,label=below:$v_2$]{};
 \node (a3) at (2,0) [colorstyle,label=below:$v_3$]{};
 \node (a4) at (3,0) [colorstyle,fill=black,label=below:$v_4$]{};
 \node (a5) at (4,0) [colorstyle,label=below:$v_5$]{};
 \node (a6) at (5,0) [colorstyle,fill=black!100,label=below:$v_6$]{};
 \node (a7) at (6,0) [colorstyle,label=below:$v_7$]{};
 \draw [thick] (a1) -- (a2);
 \draw [thick] (a2) -- (a3);
 \draw [thick] (a3) -- (a4);
 \draw [thick] (a4) -- (a5);
 \draw [thick] (a5) -- (a6);
 \draw [thick] (a6) -- (a7);
\end{tikzpicture}
\end{center}

]
    \caption{A good dominating set for $P_7$ that is inefficient for $\mu(P_7)$.}
    \label{fig:P7inefficient}
\end{figure}

Applying Corollary~\ref{SandIsos} yields a bound of the form $\gamma_s(\mu(P_7)) \le 3 + 3 + 1 = 7$. This bound, however, is too large.

Consider the secure dominating set of Figure~\ref{fig:pairedGuards}, which is not minimal. The secure dominating set of Figure~\ref{fig:pairedGuards} consists only of paired guards. Therefore Corollary~\ref{SandIsos} gives an upper bound of $4+0+1=5$ for the secure domination number of $\mu(P_7)$, and this is in fact the value of the parameter.

\begin{figure}
  \centering
\begin{center}
\begin{tikzpicture}[scale=0.9, colorstyle/.style={circle,draw=black!100,thick,inner sep=2pt, minimum size=3mm}]
 \node (a1) at (0,0) [colorstyle,label=below:$v_1$]{};
 \node (a2) at (1,0) [colorstyle,fill=black!100,label=below:$v_2$]{};
 \node (a3) at (2,0) [colorstyle,fill=black!100,label=below:$v_3$]{};
 \node (a4) at (3,0) [colorstyle,label=below:$v_4$]{};
 \node (a5) at (4,0) [colorstyle,fill=black!100,label=below:$v_5$]{};
 \node (a6) at (5,0) [colorstyle,fill=black!100,label=below:$v_6$]{};
 \node (a7) at (6,0) [colorstyle,label=below:$v_7$]{};
 \node (b1) at (0,1) [colorstyle,label=left:$u_1$]{};
 \node (b2) at (1,1) [colorstyle,label=left:$u_2$]{};
 \node (b3) at (2,1) [colorstyle,label=left:$u_3$]{};
 \node (b4) at (3,1) [colorstyle,label=left:$u_4$]{};
 \node (b5) at (4,1) [colorstyle,label=right:$u_5$]{};
 \node (b6) at (5,1) [colorstyle,label=right:$u_6$]{};
 \node (b7) at (6,1) [colorstyle,label=right:$u_7$]{};
 \node (c) at (3,2) [colorstyle,fill=red,label=above:$w$]{};
 \draw [thick] (a1) -- (a2);
 \draw [thick] (a2) -- (a3);
 \draw [thick] (a3) -- (a4);
 \draw [thick] (a4) -- (a5);
 \draw [thick] (a5) -- (a6);
 \draw [thick] (a6) -- (a7);
 \draw [thick] (b1) -- (a2);
 \draw [thick] (b2) -- (a1);
 \draw [thick] (b2) -- (a3);
 \draw [thick] (b3) -- (a2);
 \draw [thick] (b3) -- (a4);
 \draw [thick] (b4) -- (a3);
 \draw [thick] (b4) -- (a5);
 \draw [thick] (b5) -- (a6);
 \draw [thick] (b5) -- (a4);
 \draw [thick] (b6) -- (a7);
 \draw [thick] (b6) -- (a5);
 \draw [thick] (b7) -- (a6);
 \draw [thick] (c) -- (b1);
 \draw [thick] (c) -- (b2);
 \draw [thick] (c) -- (b3);
 \draw [thick] (c) -- (b4);
 \draw [thick] (c) -- (b5);
 \draw [thick] (c) -- (b6);
 \draw [thick] (c) -- (b7);
\end{tikzpicture}
\end{center}
  
  \caption{Pairing guards in $P_7$ yields an efficient secure dominating set for $\mu(P_7)$.}
  \label{fig:pairedGuards}
\end{figure}
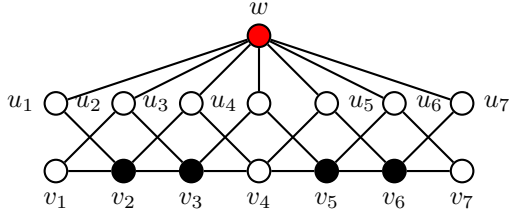

The following theorem generalizes the reasoning just given; to produce a secure dominating set of $\mu(P_n)$, choose pairs of guards in the original path, and add the cone vertex $w$. The cycle $C_n$ contains $P_n$ as a spanning subgraph, therefore Lemma~\ref{spanningBound} makes it natural to compute the secure domination numbers of the two graph families at the same time.

\begin{thm}\label{secDomMuPath}
  $$\gamma_s(\mu(P_n)) =\gamma_s(\mu(C_n))=
  \begin{cases}
    2k + 1 & n= 4k \\
    2k + 2 & n= 4k + 1 \\
    2k + 3 & n= 4k + 2 \mathrm{~or~} 4k+3\\
  \end{cases}
  $$
\end{thm}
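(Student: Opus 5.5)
The plan is to prove the upper bound for the path and the lower bound for the cycle. Since $P_n$ is a spanning subgraph of $C_n$, Lemma~\ref{spanningBound} gives $\gamma_s(\mu(C_n))\le\gamma_s(\mu(P_n))$. So it suffices to show $\gamma_s(\mu(P_n))\le f(n)$ and $\gamma_s(\mu(C_n))\ge f(n)$, where $f(n)$ is the right-hand side of the statement. Small $n$ (say $n\le 5$) I would check by hand, using Theorem~\ref{secDomMuDominating} and the preceding results.

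\textbf{Upper bound.} For the upper bound I generalize Figure~\ref{fig:pairedGuards}. For $n=4k$, take the paired set $S=\{v_{4i+2},v_{4i+3}: 0\le i<k\}$. Each vertex $v_{4i+1}$ or $v_{4i+4}$ is defended by its guarded neighbour, because the partner guard still dominates the vacated vertex. So $S$ is a secure dominating set of $P_n$ with no $S$-isolates, and Corollary~\ref{SandIsos} gives $2k+1$.

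For $n=4k+1$, Corollary~\ref{SandIsos} is too weak, since an extra isolated guard at $v_n$ would force $u_n$ into the set. Instead I check directly that $S\cup\{v_n,w\}$ is secure. The key observation is that within each block $v_{4i+1},\dots,v_{4i+4}$, every image $u_j$ is adjacent to $v_{4i+2}$ or $v_{4i+3}$. Hence when $w$ moves to any $u_j$, all other images remain dominated by $V$-guards. The vertex $v_{4k}$ is defended by $v_{4k-1}$, and this gives $2k+2$. For $n=4k+2$ and $n=4k+3$ I append a pair $\{v_{n-1},v_n\}$, respectively $\{v_{n-2},v_{n-1}\}$, to the $4k$ pattern and again use Corollary~\ref{SandIsos} (no isolates), giving $2k+3$.

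\textbf{Lower bound.} For the lower bound on $\mu(C_n)$, let $S$ be a $\gamma_s$-set, $A=S\cap V(G)$ and $B=S\cap U$. I first treat the case $w\notin S$. Then every $u_j\notin B$ must be dominated by $A$, and a $V$-guard dominates only two images, so $2|A|+|B|\ge n$. The defence of $w$ forces some $u_j\in B$ to have no external private neighbour; I expect this to push $|S|$ above $f(n)$.

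When $w\in S$, consider any $u_j\notin S$. Its defender is either $w$, in which case $(A\cup B)\setminus$ must dominate all of $U$ other than $u_j$ using $V$-guards, or a guard in $A$. In either case, $V(C_n)$ must be dominated by $A$ together with the two $V$-neighbours of each $u$-guard. The central counting claim is then that the $V$-guards and $u$-guards must together ``cover'' $U$ as well as $V$ after one swap. Since each $V$-guard covers at most two images, a secure-domination swap argument shows $|A|+|B|\ge\lceil n/2\rceil$, hence $|S|\ge\lceil n/2\rceil+1$. This matches $f(n)$ for $n\equiv 0,1,3 \pmod 4$.

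\textbf{Main obstacle.} The main obstacle is the extra $+1$ when $n=4k+2$. There I would analyse the equality case of the counting argument. Equality forces every $V$-guard to cover exactly two private images and to have a guarded neighbour, so $A$ is a disjoint union of adjacent pairs separated by gaps of length exactly $2$. Such a pattern around $C_n$ exists only when $4\mid n$. Any defect (a gap of $1$ or $3$, an isolated guard, or a $u$-guard) costs at least one extra guard, and ruling out the mixed configurations cleanly is the step I expect to need the most care.
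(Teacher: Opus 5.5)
Your overall plan follows the paper's: isolate-free pairs of guards plus $w$ on $P_n$ via Proposition~\ref{Swepn}, and image counting plus swap arguments on $C_n$, split by whether $w\in S$. There are, however, two genuine gaps.

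\textbf{The $n=4k+1$ construction fails.} The set $S\cup\{v_n,w\}$ is not secure dominating. In $\mu(P_n)$ the image $u_n$ is adjacent only to $v_{n-1}=v_{4k}$ and $w$, and $v_{4k}$ is unguarded. Your ``key observation'' covers the images inside the blocks but misses $u_n$. Concretely, an attack at $u_1$ cannot be repelled:
moving $w$ leaves $u_n$ undominated, and moving $v_2$ leaves $v_1$ (whose neighbours are $v_2,u_2$) undominated.
The fix is to put the extra guard at $v_{n-1}$ instead, i.e.\ three consecutive guards $v_{4k-2},v_{4k-1},v_{4k}$ in a final block of five, as the paper does. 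This is an isolate-free secure dominating set of $P_n$, so Corollary~\ref{SandIsos} gives $2k+2$ directly.

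\textbf{The lower bound is proved only for $4\mid n$.} Your counting actually yields two inequalities. When $w\notin S$ you get $2|A|+|B|\ge n$ with $|B|\ge1$. When $w\in S$ you get $2|A|+|B|\ge n-1$, since two private neighbours of $w$ in $U$ cannot both be defended. These give $|S|\ge\lceil (n+1)/2\rceil$ and $|S|\ge\lceil (n-1)/2\rceil+1$ respectively, and both fall one short of $f(n)$ in every nonzero residue class. So for odd $n$ your claim $|A|+|B|\ge\lceil n/2\rceil$ does not follow from ``each $V$-guard covers at most two images'', and for $w\notin S$ you only state an expectation. The extra guard you single out for $n=4k+2$ is needed for $n\equiv1,3 \pmod 4$ as well.

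The missing step is the tight-configuration swap argument behind the paper's ``an attack above a pair cannot be covered''. If the count is tight, the $V$-guards cover pairwise distinct images, so no two guards are at distance $2$. An attack at an image $u_i$ other than $w$'s private neighbour can then be answered only by its unique $V$-neighbour, say $v_{i+1}$. If $w\notin S$, this strands $u_{i+2}$. If $w\in S$ and $B=\varnothing$, it strands $v_i$ or $v_{i+2}$, because $v_{i-1}$, $v_{i+3}$ and the pair $\{v_i,v_{i+2}\}$ are all excluded.

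For $n=4k+2$ there is a single overlap or a single $u$-guard of slack. The same computation shows that only attacks at images within distance $3$ of it can be answered. That is too few once $k\ge2$, and $n=6$ is a finite check.

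Your ``pairs separated by gaps of exactly $2$'' picture is valid only in the remaining overlap-free case with $B=\varnothing$ and $w\in S$. There it comes from attacks on $V$: each unguarded $v_i$ needs a neighbouring guard whose far neighbour is also guarded. It does not come from counting, since overlap-freeness alone still allows isolated guards at distance $3$.
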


\begin{proof}
  \phantom{1}

  We first bound $\gamma_s(C_n)$ below.
  
  \textbf{Case 1: $n=4k$}
  
  Let $S$ be a set of vertices of $\mu(C_n)$ of size $2k$. We prove by contradiction that $S$ is not secure dominating.

  \textbf{Case 1a:} Suppose that $w \notin S$. An attack at $w$ must be covered, say by a guard at $u_i$. This leaves only $2k-1$ guards among the $v$ and $u$ vertices. That many guards are adjacent to $2(2k-1) = 4k-2$ of the $u$ vertices, which means that at least one of the $u$ vertices was not defended by $S$.

  \textbf{Case 1b:} Suppose that $w \in S$. Let $\hat{S}$ denote the set $S \backslash \{w\}$. Observe that $\mathrm{epn}(w,\hat{S}) \ge 2$, since $4k-2$ of the $u$ vertices are defended by $\hat{S}$. Thus an attack at one of those private neighbors cannot be defended by the guard at $w$ without abandoning another private neighbor.

  \textbf{Case 2: $n=4k + 1$}
  Let $S$ be a set of vertices of $\mu(C_n)$ of size $2k + 1$. We prove by contradiction that $S$ is not secure dominating.

  \textbf{Case 2a:} Suppose that $w \notin S$. At least one $u$ vertex must be in $S$, so that an attack at $w$ can be covered. At most one $u$ vertex can be in $S$, since otherwise there are not enough guards among the $v$ vertices to cover all $4k+1$ $u$ vertices. Then there are $2k$ guards among the $v$ vertices. For these to cover $4k$ $u$ vertices, they must come in pairs separated by $2$. But then an attack at a $u$ vertex above such a pair disrupts the configuration, since it leaves a gap of $3$ in the path of $v$ vertices.

  \textbf{Case 2b:} Suppose $w \in S$. Let $\hat{S}$ be as above. Then $\mathrm{epn}(w, \hat{S}) \ge 1$, with equality holding only if the guards among $v$ are in pairs separated by two empty vertices. An attack at a $u$ vertex above such a pair cannot be defended either by the guard at $w$ or the guards at the $v$ vertices.

  \textbf{Case 3: $n=4k+2$}
  Let $S$ be a set of vertices of $\mu(C_n)$ of size $2k + 2$. We prove by contradiction that $S$ is not secure dominating.
  
  \textbf{Case 3a:} Suppose that $w \notin S$.
  To cover an attack at $w$ and also cover an attack an any $u$ vertex means that $S$ has $2k+1$ guards all among the $v$ vertices (i.e. all but one guard). These must be distributed in pairs of two, separated by gaps of two, with an additional isolated guard. An attack above one of the pairs cannot be securely guarded, as before.
  
    \textbf{Case 3b:} Suppose $w \in S$.
    By arguments like those preceding, the vertices of $S$ among the $v$ must come in groups of two, separated by two, together with a singleton. Consider an attack at a $u$ vertex above a pair. If it is covered from a $v$ guard, then there is a large gap downstairs, with three adjacent undefended vertices. If it is covered from the $w$ guard, then the $u$ vertex above the $v$ singleton is undefended.
    
  \textbf{Case 4: $n=4k+3$}
  Let $S$ be a set of vertices of $\mu(C_n)$ of size $2k + 2$. We prove by contradiction that $S$ is not secure dominating.

  \textbf{Case 4a:} Suppose that $w \notin S$. To cover an attack at $w$ as well as at any $u$ means that there must be $2k+1$ guards among the $v$ vertices (and one among the $u$). Then, as before, an attack above a pair of adjacent $v$-guards (i.e. at one of the corresponding $u$ vertices) cannot be covered.

  \textbf{Case 4b:} Suppose $w \in S$. There are $2k+1$ guards at $v$ vertices guarding $4k+2$ $u$ vertices. There is either an isolated guard among the $v$, or redundancy becuase they come in a block of at least three. As before, an attack above a pair cannot be covered.

We have now shown that the secure domination number of the Mycielskian of a cycle is at least as large as indicated. We give an upper bound now, by showing that the indicated value is in fact sufficient for paths (which are spanning subgraphs of the corresponding cycles).
  
\textbf{Sufficiency:} In the case that $n=4k$, place guards in pairs at the center of every block of four vertices in $P_n$. In the case that $n=4k+1$, place guards as for $n=4k$, but add three guards in the center of some block of five vertices. In the cases $n=4k+2$ and $n=4k+3$ place pairs but with a smaller gap to the next adjacent block. Then these sets, which have no isolates, together with $w$, form a secure dominating set, by Proposition~\ref{Swepn}.
    \end{proof}
    
\ 

\subsection{Complete Bipartite Graphs}

The secure domination number of the Mycielskian of a complete graph is $3$ (Theorem~\ref{secDomMuDominating}). We now give the result for complete bipartite graphs.

Recall the secure domination numbers of complete bipartite graphs.

\begin{thm}\cite{ck}
Let $m \le n$ be natural numbers.
$$\gamma_s(K_{m,n})=
  \begin{cases}
    n &m=1 \\
    2 &m=2 \\
    3 &m=3 \\
    4 &m\ge 4 \\
  \end{cases}
  $$
  
\end{thm}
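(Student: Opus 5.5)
The plan is to argue directly from the definition of secure domination, case by case in $m$. Write the bipartition of $K_{m,n}$ as $A=\{a_1,\dots,a_m\}$ and $B=\{b_1,\dots,b_n\}$, with $m\le n$. One fact is used throughout: $K_{m,n}$ is not complete when $n\ge 2$, so by the characterization $\gamma_s(G)=1 \iff G$ complete, every case with $n\ge 2$ has $\gamma_s\ge 2$. (The degenerate graph $K_{1,1}=K_2$ is complete and has $\gamma_s=1=n$.)

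\textbf{Case $m=1$.} Here $K_{1,n}$ is a star with centre $a_1$, and the claim is $\gamma_s=n$.
For the lower bound, let $S$ be secure dominating.
If $a_1\notin S$, then every leaf must lie in $S$ in order to be dominated, so $|S|\ge n$.
If $a_1\in S$, consider a leaf $b\notin S$. Its only neighbour is $a_1$, so the only possible defender is $a_1$. After the swap, $(S\setminus\{a_1\})\cup\{b\}$ must still dominate every other leaf. This forces every other leaf to lie in $S$. Hence at most one leaf is outside $S$, and $|S|\ge 1+(n-1)=n$.
For the upper bound, $\{a_1,b_1,\dots,b_{n-1}\}$ works: the single remaining leaf $b_n$ is defended by $a_1$, since $\{b_n,b_1,\dots,b_{n-1}\}=B$ dominates the star.

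\textbf{Case $m=2$.} Take $S=A=\{a_1,a_2\}$, which dominates $B$. An attack at $b\in B$ is defended by $a_1$, because $\{b,a_2\}$ is dominating: $a_2$ covers $B$ and $b$ covers $A$. Combined with the lower bound $\gamma_s\ge 2$, this gives $\gamma_s(K_{2,n})=2$.

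\textbf{Cases $m\ge 3$.} For these I would first look for the lower bound by excluding $2$-sets, and this is exactly where I expect the main obstacle.
A $2$-set lying inside a single part $X$ fails, because the vertices of $X\setminus S$ are undominated once $|X|\ge 3$.
A mixed set $S=\{a,b\}$ with $a\in A$, $b\in B$, however, appears to be secure dominating for every $m,n\ge 2$:
\begin{itemize}
\item an attacker $a'\in A$ is handled by swapping out $a$, since $\{a',b\}$ is again one vertex from each side and so dominates;
\item symmetrically, an attacker $b'\in B$ is handled by swapping out $b$.
\end{itemize}
So, if I have not misread the definition, $\gamma_s(K_{m,n})=2$ for all $m\ge 2$, and the values $3$ and $4$ stated for $m\ge 3$ cannot be proved. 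Before continuing, I would check the original source \cite{ck} to see whether those values belong to a different parameter, such as weak Roman or eternal/secure total domination, or to a different convention. I would not try to force the stated bound. In any later use of this theorem in the paper, only the cases $m=1$ and $m=2$ would be relied upon, unless the discrepancy is resolved.
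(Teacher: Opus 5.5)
Your Cases $m=1$ and $m=2$ are correct. Your conclusion that the theorem fails for $m\ge 3$, however, comes from dropping the adjacency requirement in the definition; with it restored, the stated values are right.

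\textbf{The error.} A guard at $v\in S$ can answer an attack at $u\notin S$ only if $v\in N(u)$. The displayed definition in the introduction omits this phrase, but the sentence after it and the source \cite{ck} both require it. So do the paper's own arguments, e.g.\ $s\in N(c)\cap S$ in the proof of Proposition~\ref{Swepn}. You use it yourself in Case $m=1$ (``its only neighbour is $a_1$, so the only possible defender is $a_1$''). Without it, $\{a_1,b_1\}$ would already be secure in the star (swap $b_1$ for the attacked leaf), and $\gamma_s(K_{1,n})=n$ would fail for $n\ge 3$. So your two cases use incompatible definitions. In the mixed-pair argument you let $a$ defend $a'\in A$, but $A$ is independent, so $a\notin N(a')$. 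The only guard adjacent to $a'$ is $b$, and $(S\setminus\{b\})\cup\{a'\}=\{a,a'\}\subseteq A$ leaves $A\setminus\{a,a'\}$ undominated once $m\ge 3$. Mixed $2$-sets are therefore not secure for $m\ge 3$.

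\textbf{Completing the proof.} The paper itself only cites \cite{ck}, so you would need to supply the remaining cases; they are short.
For $m=3$: the set $S=A$ is secure. An attack at $b\in B$ is answered by $a_1$, and $\{b,a_2,a_3\}$ dominates, since $a_2$ covers $B$ and $b$ covers $A$. A $2$-set inside one part misses a vertex of that part, and mixed $2$-sets fail as above, so $\gamma_s=3$.
For $m\ge 4$: the set $\{a_1,a_2,b_1,b_2\}$ is secure. An attack at $a\in A$ is answered by $b_1$; then $b_2$ still covers $A$ and $a_1$ covers $B$. Attacks in $B$ are symmetric. For the lower bound, take any $S$ with $|S|\le 3$. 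If $S$ lies in one part, it misses a vertex of that part, since both parts have at least $4$ vertices. Otherwise some part $X$ contains exactly one vertex $x$ of $S$, and the other part $Y$ contains at most two. An attack at $y\in Y\setminus S$ can only be answered by $x$. The resulting set $(S\cap Y)\cup\{y\}$ consists of at most three vertices of $Y$, so it leaves a vertex of $Y$ undominated because $|Y|\ge 4$. Hence $\gamma_s(K_{m,n})=4$.
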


We have the following for Mycielskians of complete bipartite graphs.

\begin{thm}
Let $m \le n$ be natural numbers.
$$\gamma_s(\mu(K_{m,n}))=
  \begin{cases}
    3 &m=1 \\
    4 &m=2\mathrm{~or~}3 \\
    5 &m\ge 4 \\
  \end{cases}
  $$
\end{thm}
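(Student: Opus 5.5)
Write the bipartition of $K_{m,n}$ as $A=\{a_1,\dots,a_m\}$ and $B=\{b_1,\dots,b_n\}$, with images $u_{a_i}$, $u_{b_j}$. In $\mu(K_{m,n})$ the image $u_{a_i}$ is adjacent to $w$ and to all of $B$, and $u_{b_j}$ is adjacent to $w$ and to all of $A$. The case $m=1$ is immediate: $K_{1,n}$ has a dominating vertex, so Theorem~\ref{secDomMuDominating} gives $3$. For the other cases I will give an explicit secure dominating set for the upper bound and then argue by cases on whether $w\in S$ for the lower bound.

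\textbf{Upper bounds.} For $m\ge 4$, take $S=\{a,a',b,b'\}$ with $a,a'\in A$ and $b,b'\in B$; this is a secure dominating set of $K_{m,n}$. Each vertex of $S$ has a neighbour in $S$, so there are no $S$-isolates, and Corollary~\ref{SandIsos} gives the bound $5$. For $m=2,3$ I would try $S=\{a_1,b_1,u_{a_1},w\}$. This set dominates: $A\cup U_B$ is dominated by $b_1$ or $a_1$, $B\cup U_A$ by $a_1$ or $u_{a_1}$, and $w$ is in $S$. To check security:
\begin{itemize}
\item an attack at $u_{b_j}$ is defended by $w$, since $\{a_1,b_1,u_{a_1},u_{b_j}\}$ still dominates, with $w$ covered by $u_{a_1}$;
\item an attack at $u_{a_i}$ is defended by $w$ in the same way;
\item an attack at $b_j$ is defended by $a_1$, since $u_{a_1}$ still covers $B$, and $b_1,b_j$ cover $A$ and $U_A$;
\item an attack at $a_i$ is defended by $b_1$: then $a_1,a_i$ cover $B$ and $U_B$, while $u_{a_1}$ covers $B$ and $w$, and every $u_{a_k}$ is covered by $w$.
\end{itemize}
This appears to work for every $m$, which would give $4$ always. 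That contradicts the claimed $5$ for $m\ge4$, so I must recheck. In the last item, after $b_1$ leaves, the remaining vertex $a_k$ must still be dominated; it is adjacent only to $B$ and $U_B$, and $b_1$ has left. So $a_k$ is undominated unless $A=\{a_1,a_i\}$. This is where $m$ matters.

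For $m=3$, $A=\{a_1,a_2,a_3\}$, and the attack at $a_2$ leaves $a_3$ exposed. So for $m=3$ I would instead use $S=\{a_1,a_2,b_1,w\}$ or a variant, and verify that an attack at $a_3$ is handled by $b_1$ because $a_1,a_2$ still cover $B$ and $U_B$, with $U_A$ covered by $w$. The delicate checks are attacks at $b_j$ and at $u_{a_i}$. I expect to need a small tweak, such as replacing $w$ by $u_{b_1}$; the search is finite and I will do it by hand.

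\textbf{Lower bounds.} Since $K_{m,n}$ with $m\ge2$ has no dominating vertex, $3$ guards should not suffice for $m\ge2$. I will use the earlier proposition: if $\gamma_s(\mu(G))=3$ and $\gamma_s(G)=3$, then $G$ has a dominating vertex. This only covers $m=3$, so I will argue directly. With $|S|=3$, split into the cases $w\in S$ and $w\notin S$ (in which case some $u$ lies in $S$), and count the coverage of $U$ and $V$: the $n\ge2$ vertices of $U_B$ can only be dominated by $A$-vertices or by $w$, and the same holds symmetrically. For $m\ge4$ with $|S|=4$, the argument is similar but heavier. If $w\notin S$, some $u_x\in S$; covering both $U_A$ and $U_B$ then forces guards on both sides, and I will exhibit an attack that forces moving a guard who is the unique dominator of some $a_k$ or $b_k$. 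If $w\in S$, the three remaining guards must dominate $A\cup B$ and be secure in a way resembling $\gamma_s(K_{m,n})=4$.

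\textbf{Main obstacle.} This last case, $w\in S$, $|S|=4$, $m\ge4$, is the hardest. I expect to show that the three non-$w$ guards restricted to $V$ would form a near-secure dominating set of $K_{m,n}$ of size at most $3$, contradicting $\gamma_s(K_{m,n})=4$. Guards at $U$ vertices can stand in for $V$ vertices only partially, since they do not dominate their own side, and this point needs a careful case split.
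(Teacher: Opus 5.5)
Your upper bounds are fine and essentially match the paper. For $m\ge4$, the isolate-free set $\{a_1,a_2,b_1,b_2\}$ together with Corollary~\ref{SandIsos} yields the paper's five-element set. Your $\{a_1,b_1,u_{a_1},w\}$ does work for $m=2$, since the failure you found needs a third vertex in $A$. For $m=3$ no tweak is needed: $\{a_1,a_2,b_1,w\}$, which is the paper's set, already works. An attack on $a_3$ is answered by $b_1$, one on $b_j$ by $a_2$, and one on an image vertex by $w$, after which $b_1$ covers $U_A$ and $a_1$ covers $U_B$.

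\textbf{The case $m=n=2$ is false as stated.} The real problem is the lower bound. The paper also omits it, and in one boundary case it cannot be supplied. Your step ``$K_{m,n}$ with $m\ge2$ has no dominating vertex, so $3$ guards should not suffice'' fails for $K_{2,2}=C_4$. There $\{a_1,b_1\}$ is an isolate-free secure dominating set of $K_{2,2}$. So the same Corollary~\ref{SandIsos} you use for $m\ge4$ makes $\{a_1,b_1,w\}$ a secure dominating set of $\mu(K_{2,2})$. Directly: $a_2$ is answered by $b_1$ (giving $\{a_1,a_2,w\}$), $b_2$ by $a_1$, and every image vertex by $w$. Hence $\gamma_s(\mu(K_{2,2}))=3$, consistent with Theorem~\ref{secDomMuPath} at $n=4$. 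The case $m=2$ must therefore assume $n\ge3$. The proposition you cite gives no contradiction, because it requires $\gamma_s(G)=3$, whereas $\gamma_s(K_{2,2})=2$.

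\textbf{The missing lower-bound argument.} For the remaining cases your lower bounds are only a plan. The missing idea is a counting lemma that removes your ``main obstacle'': if $S$ is secure dominating in $\mu(K_{m,n})$ and $m\ge|S|$, then $|S\cap(B\cup U_B)|\ge2$.
\begin{itemize}
\item A vertex of $A$ is dominated only by itself or by a vertex of $B\cup U_B$.
\item With no guard in $B\cup U_B$ we would get $A\subseteq S$, so $S=A$ and $w$ is undominated.
\item Hence some $a_i\notin S$, and an attack there must be answered by some $d\in S\cap(B\cup U_B)$.
\item If $d$ were the only such guard, $(S\setminus\{d\})\cup\{a_i\}$ would have to equal $A$, again leaving $w$ undominated.
\end{itemize}
Symmetrically, $n\ge|S|$ gives $|S\cap(A\cup U_A)|\ge2$. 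The lemma settles each case as follows.
\begin{itemize}
\item For $m=3$ and $|S|=3$, the lemma already demands four guards.
\item For $m\ge4$ and $|S|=4$, it forces $w\notin S$, so your hard case $w\in S$ is immediate, and exactly two guards lie on each side. Dominating $w$, $U_A$ and $U_B$ (the latter two each have at least four vertices) then forces $S=\{a,b,x,y\}$ with $a\in A$, $b\in B$, $x\in A\cup U_A$, $y\in B\cup U_B$ and, say, $x\in U_A$. An attack on any $u_{b_j}\notin S$ can then only be answered by $a$, after which $U_B$ is undominated.
\item For $m=2$ and $n\ge3$, the lemma gives two guards in $A\cup U_A$. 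If the third guard is $w$, then $S=\{a_1,a_2,w\}$, which fails at an attack on $u_{a_1}$. Otherwise domination forces $S=\{a,u_{a_k},b\}$, which fails at an attack on any $b'\in B\setminus\{b\}$.
\end{itemize}
This last case is exactly where $n=2$ escapes. In $\mu(K_{2,2})$, the attack on $b_2$ against $\{a_1,b_1,w\}$ is answered by $a_1$, producing $\{b_1,b_2,w\}$. That set contains all of $B$ while keeping $w$, which is the situation the lemma excludes only when $n\ge|S|$.
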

\begin{proof}
  The case $m=1$ is an instance of Theorem~\ref{secDomMuDominating}, since the graph has a dominating vertex. In the other cases, we omit necessity, which is a straightforward but tedious check, and consider only sufficiency.

  Label the vertices so that $\{v_1,v_2,\dots,v_m\}$ constitute one bipartite part (the smaller), and $\{v_{m+1},v_{m+2},\dots,v_{m+n}\}$ form the second part. Let $V_m$ denote the former set, $V_n$ the latter, and let $U_m$ and $U_n$ be the respective sets of images.
  
  In the case $m=2$ and $m=3$, the set $S=\{v_1,v_2,v_4,w\}$ is a secure dominating set, which is seen as follows. The vertex $v_3$ (if $m=3$) is defended by the guard at $v_4$, any vertex of $V_n$ is defended by the guard at $v_2$, and any vertex of  $U_m$ or $U_n$ is defended by the guard at $w$.

  For $m \ge 4$ we can reason similarly. Consider the set $S = \{v_1,v_2,v_{m+1},v_{m+2},w\}$. Any vertex of $V_m$ is defended by the guard at $v_{m+2}$, any vertex of $V_n$ is defended by the guard at $v_2$, and any vertex of  $U_m$ or $U_n$ is defended by the guard at $w$.

\end{proof}

\section{Constructions for given parameters}

It is possible to construct graphs whose secure domination number bears a specified relation to the secure domination number of the Mycielskian of the graph. We collect relevant constructions here.

\subsection{Prescribed Gaps}

Given a natural number $k$, we can ask for a graph $G$ such that $|\gamma_s(\mu(G)) - \gamma_s(G)| = k$. Two simple families of graphs solve this problem completely: paths and stars. The next subsection considers the more challenging problem of prescribing values of the parameters separately, and those results subsume these.

First we consider the case that the secure domination number of the Mycielskian is greater than the secure domination number of the original graph.

\begin{prop}
 For any $k \in \mathbb{N}$, there is a graph $G$ such that ${\gamma_s(\mu(G)) - \gamma_s(G) = k}$. 
\end{prop}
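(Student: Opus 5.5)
The paper says paths and stars solve the gap problem, and for $\gamma_s(\mu(G)) > \gamma_s(G)$ the natural family is paths. By Theorem~\ref{secDomPath} and Theorem~\ref{secDomMuPath}, both parameters for $P_n$ are explicit, so the plan is to compute $f(n) = \gamma_s(\mu(P_n)) - \lceil 3n/7 \rceil$, show that $f$ takes every natural value $k$, and take $G = P_n$ for a suitable $n$.

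\textbf{Growth of $f$.} Up to bounded terms, $\gamma_s(\mu(P_n))$ behaves like $n/2$ and $\gamma_s(P_n)$ like $3n/7$. Hence $f(n) \approx n/14 \to \infty$, so $f$ is unbounded.

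\textbf{Intermediate values.} I will show that $f$ changes by at most $1$ when $n$ increases by $1$.
\begin{itemize}
\item From $n$ to $n+1$, the Mycielskian value increases by $1$, $0$, $1$, $0$ across the four residues mod $4$: it goes $2k+1 \to 2k+2 \to 2k+3 \to 2k+3 \to 2k+3$.
\item From $n$ to $n+1$, $\lceil 3n/7 \rceil$ increases by $0$ or $1$.
\item So $f(n+1) - f(n) \in \{-1, 0, 1\}$.
\end{itemize}
Since $f$ moves in steps of size at most $1$ and is unbounded above, a discrete intermediate value argument shows it hits every integer between $f(n_0)$ and infinity, for any starting point $n_0$.

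\textbf{Starting point.} For $k = 0$, $n = 2$ works: $\mu(P_2)$ has a dominating vertex, so $\gamma_s(\mu(P_2)) = 3$, while Theorem~\ref{secDomPath} gives $\gamma_s(P_2) = \lceil 6/7 \rceil = 1$. That actually gives $f(2) = 2$, not $0$, so I need a better base. I will tabulate $f$ for small $n$:
\begin{itemize}
\item $n = 4$: $3 - 2 = 1$.
\item $n = 5$: $4 - 3 = 1$.
\item $n = 6$: $5 - 3 = 2$.
\item $n = 7$: $5 - 3 = 2$.
\item $n = 8$: $5 - 4 = 1$.
\item $n = 14$: $9 - 6 = 3$, since $14 = 4\cdot 3 + 2$.
\end{itemize}
So $f(4) = 1$ is a base for $k \ge 1$. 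For $k = 0$ (if $0 \in \mathbb{N}$), I would use $G = K_n$ with $n \ge 3$. This gives $\gamma_s(\mu(K_n)) = 3$ by Theorem~\ref{secDomMuDominating}, but $\gamma_s(K_n) = 1$, so it yields a gap of $2$, not $0$. I will therefore search among small graphs, or use $K_{3,3}$: $\gamma_s = 3$ versus $\gamma_s(\mu(K_{3,3})) = 4$, a gap of $1$. A zero gap may require a separate example, or I will read $\mathbb{N}$ as starting at $1$.

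\textbf{Main obstacle.} The hard part is making the step bound rigorous. The Mycielskian formula and $\lceil 3n/7 \rceil$ have different periods, 4 and 7, so I will verify the bound over $n \bmod 28$. Simple monotonicity reasoning suffices, since each function is nondecreasing with unit jumps. As a fallback, I can give explicit $n$: since $f(n+28) = f(n) + 2$, choosing suitable $n$ in one period for $k \bmod 2$ and shifting by multiples of $28$ produces every $k \ge 1$.
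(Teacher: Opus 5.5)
Your proof is correct for $k \ge 1$. It uses the same family (paths) and the same two inputs as the paper, Theorems~\ref{secDomPath} and~\ref{secDomMuPath}, but it establishes existence differently. The paper writes down explicit witnesses: $P_3$ for $k=1$, $P_{14k-26}$ for even $k\ge 2$, and $P_{14k-28}$ for odd $k\ge 3$. Each is checked by evaluating the two closed forms; for example, for $k=2j$ one has $n=4(7j-7)+2$, giving $14j-11$ against $\lceil 12j-78/7\rceil = 12j-11$.

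You instead show three things about $f(n)=\gamma_s(\mu(P_n))-\lceil 3n/7\rceil$: its steps lie in $\{-1,0,1\}$, it grows roughly like $n/14$, and $f(4)=1$. A discrete intermediate value argument then hits every $k\ge 1$. Your route avoids evaluating ceilings case by case, and it applies to any pair of parameters with unit increments and different linear growth rates. The paper's route gives concrete witnesses, which it claims are the smallest paths.

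Your fallback is exactly the paper's construction. The base values $f(2)=2$ and $f(14)=3$, shifted by $f(n+28)=f(n)+2$, give $n=2+28(j-1)=14k-26$ for $k=2j$ and $n=14+28(j-1)=14k-28$ for $k=2j+1$.

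There are two small slips. First, you describe the Mycielskian increments as $1,0,1,0$. The sequence you actually write, $2k+1\to 2k+2\to 2k+3\to 2k+3\to 2k+3$, has increments $1,1,0,0$. Only membership in $\{0,1\}$ matters, so the argument is unaffected. Second, your $k=0$ discussion is left unresolved. The paper also reads $\mathbb{N}$ as starting at $1$: it handles only $k=1$ and $k\ge 2$, and it writes $\mathbb{Z}_{\ge 0}$ explicitly in the next proposition. If you do want $k=0$, the star $K_{1,3}$ works. Indeed $\gamma_s(K_{1,3})=3$, and $\gamma_s(\mu(K_{1,3}))=3$ by Theorem~\ref{secDomMuDominating}.
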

       
\begin{proof}
Let $k\ge 2$ be a natural number. If $k$ is even, then $P_{14k-26}$ is a path graph (the smallest, in fact) such that $ \gamma_s(\mu(G)) - \gamma_s(G)= k$. If $k$ is odd, then $P_{14k-28}$ is a path graph (the smallest, in fact) such that ${\gamma_s(\mu(G)) - \gamma_s(G) = k}$.

  For the case $k=1$, consider $P_3$.
  
\end{proof}

\begin{prop}
  For any $k \in \mathbb{Z}_{\ge 0}$, there is a graph $G$ such that ${\gamma_s(G) - \gamma_s(\mu(G)) = k}$
\end{prop}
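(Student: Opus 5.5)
Stars give the construction, as the subsection's opening remark indicates ("paths and stars"). We need a graph $G$ with $\gamma_s(G) - \gamma_s(\mu(G)) = k$ for each $k \ge 0$.

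Take $G = K_{1,n}$ with $n \ge 2$. The secure domination theorem for complete bipartite graphs quoted from \cite{ck}, with $m=1$, gives $\gamma_s(K_{1,n}) = n$. The star has a dominating vertex (its center) and order $n+1 \ge 2$, so Theorem~\ref{secDomMuDominating} gives $\gamma_s(\mu(K_{1,n})) = 3$. Hence
\[ \gamma_s(K_{1,n}) - \gamma_s(\mu(K_{1,n})) = n - 3, \]
and choosing $n = k+3$ realizes every $k \ge 0$. Since $k+3 \ge 3$, the hypothesis $n \ge 2$ and connectivity hold, so both cited results apply directly.

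I would add a short sanity check on the $m=1$ formula, since it is the only input not proved in this paper. In $K_{1,n}$ with $n\ge 2$, a set $S$ avoiding some leaf $\ell$ must defend $\ell$ from the center, because the leaves are independent. So $S$ contains the center and leaves $S\setminus\{\text{center}\}\cup\{\ell\}$ dominating. That forces every other leaf into $S$, giving $|S| \ge n$. If instead $S$ contains all leaves, then $|S| \ge n$ directly.

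There is no real obstacle beyond this. The only care point is the case $k=0$ with $n=3$: $K_{1,3}$ has $\gamma_s=3$, and $\mu(K_{1,3})$ has value $3$ by Theorem~\ref{secDomMuDominating}, so it is consistent.
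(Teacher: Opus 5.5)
Your proposal is correct and is essentially the paper's proof: the paper simply asserts that the star $K_{1,k+3}$ works. Your citation of $\gamma_s(K_{1,n}) = n$ and Theorem~\ref{secDomMuDominating}, together with your check of the lower bound $\gamma_s(K_{1,n}) \ge n$, supplies the justification the paper leaves implicit.
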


\begin{proof}
  A star $G=K_{1,k+3}$ satisfies $\gamma_s(G) - \gamma_s(\mu(G)) = k$.
\end{proof}

\subsection{Prescribed Values}

We can ask to prescribe the parameter values individually and not just the gap between them. The following theorem largely solves this problem, excluding extreme cases. Note the relevance of Proposition~\ref{prop:domBound}. The secure domination number of the Mycielskian cannot be significantly greater than the secure domination number of the original graph; more precisely, ${\gamma_s(\mu(G)) \le 2\gamma(G) + 1 \le 2\gamma_s(G) + 1}$. We are able at present to realize in general every possibility except ${\gamma_s(\mu(G)) = 2\gamma_s(G)}$ and ${\gamma_s(\mu(G)) = 2\gamma_s(G)+1}$.

\begin{thm}\label{thm:prescVal}
  Given a natural number $a\ge 2$ and a natural number $b$ satisfying ${3 \le b \le 2a - 1}$, there is a graph $G$ such that $\gamma_s(G) = a$ and $\gamma_s(\mu(G))=b$.
\end{thm}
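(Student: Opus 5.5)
The plan is to build $G$ from two ingredients. One ingredient raises $\gamma_s(G)$ while leaving $\gamma_s(\mu(G))$ almost unchanged, namely pendant leaves at a single vertex (star-like behaviour). The other raises both parameters at the extreme rate $\gamma_s(\mu)=2\gamma_s-1$. The two endpoints of the range are handled first, since they show the two ingredients in isolation.

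\emph{Case $b=3$.} Take $G=K_{1,a}$. By the theorem on complete bipartite graphs from \cite{ck}, $\gamma_s(K_{1,a})=a$ for $a\ge 2$. Since $K_{1,a}$ has a dominating vertex, Theorem~\ref{secDomMuDominating} gives $\gamma_s(\mu(G))=3$.

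\emph{Case $b=2a-1$.} Here I want a family $H_j$ with $\gamma_s(H_j)=j$ and $\gamma_s(\mu(H_j))=2j-1$. The model is $P_4$: Theorem~\ref{secDomPath} gives $\gamma_s(P_4)=2$, and Theorem~\ref{secDomMuPath} gives $\gamma_s(\mu(P_4))=3$. I would generalize by chaining $j-1$ such units into a caterpillar-type graph. The aim is that every minimum secure dominating set is forced to consist of vertices that are far apart in $G$. Then no two guards of $G$ have overlapping neighbourhoods, and in $\mu(G)$ the images $u_i$ must essentially be guarded in pairs, which forces $\gamma_s(\mu)\ge 2\gamma(G)-1$. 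For the upper bound I would use Proposition~\ref{prop:domBound} and then save one guard, since the hub $w$ can defend one extra image. This should give exactly $2j-1$.

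\emph{Interpolation.} For general $(a,b)$ with $3\le b\le 2a-1$, write $b=2j-1+\epsilon$ with $\epsilon\in\{0,1\}$. Attach $t$ pendant leaves to a chosen vertex $x$ of $H_j$. Each new leaf forces one more guard in $G$, as in a star, so $\gamma_s(G)=j+t$ (up to a constant I would pin down); choose $t$ so that this equals $a$. In $\mu(G)$, all these leaves and their images are covered by guards at $x$, $u_x$ and $w$, exactly as in the proof of Theorem~\ref{secDomMuDominating}. So $\gamma_s(\mu(G))$ rises by at most a fixed small constant, and I would tune this constant to absorb the parity $\epsilon$. Alternatively, the parity can be adjusted by choosing whether $x$ is already a guard vertex of $H_j$. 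The upper bounds follow by writing down explicit sets and checking defenders, in the style of Proposition~\ref{Swepn}.

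\emph{Main obstacle.} The hard part is the lower bounds on $\gamma_s(\mu(G))$, that is, showing no smaller secure dominating set exists in the Mycielskian. I would first try a counting argument like Cases~1a and~1b in the proof of Theorem~\ref{secDomMuPath}. Split according to whether $w\in S$. Count how many images in $U$ can be dominated by the guards in $V(G)$. Then use that an attack at an external private neighbour of $w$ cannot be defended without leaving some other image undominated. For the pendant leaves, the key step is that each leaf's image $u_\ell$ is adjacent only to $x$ and $w$. Hence the guards at $x$ and $w$ are committed, and these leaves contribute nothing beyond a constant to the count.
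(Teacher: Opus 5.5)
Your case $b=3$ is correct; it is exactly what the paper's second construction gives when $b=3$. The rest is a plan whose load-bearing parts are missing. The real content of the theorem is realizing the extreme $\gamma_s(\mu(G))=2\gamma_s(G)-1$, and you never exhibit $H_j$. The obvious readings of ``chaining copies of $P_4$'' fail. Three copies glued end to end give $P_{10}$, with $\gamma_s(P_{10})=5\neq 4$; more generally $\gamma_s(\mu(P_n))/\gamma_s(P_n)\to 7/6$ by Theorems~\ref{secDomPath} and~\ref{secDomMuPath}. A caterpillar with a leaf on every spine vertex (spine of length at least two) has the spine as an isolate-free $\gamma_s$-set, so Corollary~\ref{SandIsos} gives $\gamma_s(\mu)\le\gamma_s+1$, which is the opposite extreme.

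The two halves of your extremal argument are also inconsistent. Proposition~\ref{prop:domBound} minus one guard gives $2\gamma(H_j)$, which is even and hence never $2j-1$. Getting $2j-1$ from that bound requires $\gamma(H_j)=j-1$, but then your lower bound $2\gamma(G)-1$ only gives $2j-3$. Finally, the step from the shape of minimum secure dominating sets of $G$ to a lower bound on $\gamma_s(\mu(G))$ is unjustified. A secure dominating set of $\mu(G)$ may use $w$, images, and non-secure configurations on $V(G)$. Also, $\gamma_s(\mu(G))\ge 2\gamma(G)-1$ is not a general inequality (for $P_{24}$, $2\gamma-1=15>13=\gamma_s(\mu(P_{24}))$), so it needs a real argument tied to a specific graph.

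The interpolation has the same problem: the constants ``to pin down'' are the whole proof. Pendant leaves at $x$ shift the pair $(\gamma_s(G),\gamma_s(\mu(G)))$ by $t$ minus a constant in the first coordinate and by a fixed constant in the second. So from bases $(j,2j-1)$ you reach only one parity of $b$. The parity fix via the choice of $x$ is asserted, not checked. You also give no lower bound for $\gamma_s(\mu(G))$ after attaching the leaves, and that bound is not automatic, since guards at $x$ and $u_x$ may simultaneously do work that $H_j$ needed.

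The paper avoids all this with two gadgets whose increments are explicit. For $b>a$, take $K_{b-a}$, hang a path $v_i l_i^0 l_i^1$ at each clique vertex, and hang $2a-b-1$ further such paths at $v_1$. A clique vertex with its pendant path costs $1$ in $\gamma_s(G)$ but $2$ in $\gamma_s(\mu(G))$, while an extra pendant path at $v_1$ costs $1$ in each, so there is no parity issue. The upper bound is Proposition~\ref{Swepn} applied to the isolate-free set of all $v_i$ and all $l^0$'s. The lower bound is a count driven by the ends of the pendant paths: the image of $l^1$ is adjacent only to $l^0$ and $w$, and $l^1$ only to $l^0$ and the image of $l^0$. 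At $b=2a-1$ this is $K_{a-1}$ with a pendant $P_2$ at every vertex (for example $P_3$ and $P_6$), which is the $H_j$ you were after; it has $\gamma=a-1$ and attains Proposition~\ref{prop:domBound}. For $b\le a$ the paper uses a caterpillar: a path $v_1\cdots v_{b-2}$ with a leaf at each vertex and $a-b+2$ extra leaves at $v_1$.
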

\begin{proof}
  We introduce two constructions according to the relative sizes of the parameters $a$ and $b$.
  
  \textit{Case 1: $b > a$}
  
  Construct a graph $G$ in the following manner. Consider a complete graph on $b-a$ vertices, with vertex set $\{v_1,v_2,\dots,v_{b-a}\}$. To each of these vertices, attach a subdivided leaf. In other words, add vertices $\{l_i^0,l_i^1\}$, an edge $(v_il_i^0)$, and an edge $(l_i^0l_i^1)$. Add an additional ${2a-b-1}$ subdivided leaves (with vertices $\{m_j^0,m_j^1\}$ to the vertex $v_1$. This is the graph $G$.
  First, we show that $\gamma_s(G) = a$. The set $$\{l_1^0,l_2^0, \dots, l_{b-a},m_1^0,m_2^0,\dots,m_{2a-b-1}^0,v_1\}$$ is a secure dominating set. The guards among the subdivided leaves defend the degree-one vertices, and the guard $v_1$ defends the vertices of the complete subgraph induced by $\{v_i\}$. It is clear too that $a$ guards are necessary, since each leaf needs a guard, and one additional guard is needed to defend the high-degree vertices.
  Second, we show that $\gamma_s(\mu(G)) = b$. The set $$\{l_1^0,l_2^0, \dots, l_{b-a},m_1^0,m_2^0,\dots,m_{2a-b-1}^0,v_1,v_2,\dots,v_{b-a},w\}$$ does securely defend $\mu(G)$, where $w$ is the cone vertex of the Mycielskian as before. The vertices within $G$ induce a connected graph, so by Proposition~\ref{Swepn} we only need to add $w$ to obtain a secure dominating set of $\mu(G)$. That $b$ guards are necessary can be seen in the following manner. Since the image of each leaf (i.e. the images of each $l_i^1$ and $m_j^1$) is defended only by $w$ and the lower vertex adjacent to the leaf, and the latter is also the sole defender of the leaf itself, there are necessarily two guards for each subdivided leaf in the graph, with the possibility of duplication in the case that the vertices of the complete graph, where the subdivided leaves attach, are included. This gives $2(b-a) + (2a-b-1) = b-1$ guards needed among the lower level, and one additional guard is needed as well.

\textit{Case: $b \le a$}

In the case that the secure domination number of the Mycielskian is not greater than that of the original graph, path with many leaves added to it serves the purpose.

Take a path on $b-2$ vertices $\{v_1,\dots,v_{b-2}\}$. To each of these vertices add a leaf $l_i$. In other words, include a vertex $l_i$ and an edge $(v_i,l_i)$. Add an additional $a-b+2$ $m_j$ adjacent to $v_1$. This is the graph $G$.

First, we show that $\gamma_s(G) = a$. Each leaf requires a defender, and there are ${(a-b+2) + (b-2) = a}$ such leaves.

Second, we show that $\gamma_s(\mu(G)) = b$. Consider the set consisting of all $v_i$, the image of $v_1$, and $w$. This set, with cardinality ${(b-2) + 1 + 1 = b}$, is evidently a secure dominating set, since anything at the lower level is securely defended by the adjacent $v_i$ guard, and anything above (i.e. any image vertex) is securely defended by $w$. No fewer suffice, since each pair  $(v_il_i)$ with $i \ge 2$ requires a defender, the vertices around $v_1$ require two defenders, and one additional defender $w$ is also necessary.
\end{proof}

\vskip.4in

\section{Future Work}

We have established a number of results relating the Mycielski construction and secure domination. Various avenues for continued investigation remain open.

\begin{enumerate}
\item Let $G$ be a graph and $a, b$ and $c$ be three natural numbers where $a \leq b$ and $a+1\leq c \leq 2a+1$, such that $\gamma(G)=a, \gamma_s(G) = b$ and $ \gamma_s(\mu(G)) =c$. If $a \geq 2$ and $c=a+1$, must $b=a$?
\item Can we use $\Delta(G)$ to bound $\gamma_s(\mu(G))$? More precisly, does $\Delta(G) = n-k$ imply that $\gamma_s(\mu(G)) \leq k+2$?
\item Can the open cases for the problem of Theorem~\ref{thm:prescVal} be solved? In other words, are there graphs $G$ for which $\gamma_s(\mu(G))= 2\gamma_s(G) = 2k$ for, for arbitrary $k$? Are there graphs $G$ for which $\gamma_s(\mu(G))= 2\gamma_s(G)+1 = 2k+1$ for, for arbitrary $k$?
\item Characterize graphs for which $\gamma_s(\mu(G)) = \gamma_s(G) $. Note that a family of such graphs is given in Theorem~\ref{thm:prescVal}.

\end{enumerate}

\printbibliography

\bigskip

=
\bigskip

\noindent
\textbf{Kiran R. Bhutani}\\
Department of Mathematics and Statistics, The Catholic University of America, 
620 Michigan Avenue NE, Washington, DC 20064, USA.\\
E-mail: bhutani@cua.edu

\vspace{1em}

\noindent
\textbf{Anthony Christiana}\\
Department of Mathematics, 
The George Washington University, 
800 22nd Street NW, Washington, DC 20052, USA.\\
E-mail: ajchristiana@gwu.edu

\vspace{1em}

\noindent
\textbf{Peter Ulrickson}\\
Department of Mathematics and Statistics, The Catholic University of America, 
620 Michigan Avenue NE, Washington, DC 20064, USA.\\
E-mail: ulrickson@cua.edu
\end{document}